\theoremstyle{plain}
\newtheorem{theorem}{Theorem}[section]
\newtheorem{lemma}[theorem]{Lemma}
\theoremstyle{definition}
\newtheorem{definition}[theorem]{Definition}
\numberwithin{equation}{section}
\DeclareMathOperator*{\osc}{osc}
\title[Regularity for functions in solution classes]{$C^{1, \alpha}$-regularity for functions in solution classes and its application to parabolic normalized $p$-Laplace equations}
\author{Se-Chan Lee}
\address{Research Institute of Mathematics,
	Seoul National University, Seoul 08826, Republic of Korea.}
\email{dltpcks1@snu.ac.kr}
\author{Hyungsung Yun}
\address{Department of Mathematical Sciences,
	Seoul National University, Seoul 08826, Republic of Korea.}
\email{euler@snu.ac.kr}
\subjclass[2020]{35B65; 35D40; 35K92}
\keywords{Solution class; global regularity; parabolic normalized $p$-Laplace equations}
\thanks{Se-Chan Lee is supported by Basic Science Research Program through the National Research Foundation of Korea (NRF) funded by the Ministry of Education (2022R1A6A3A01086546).}
\begin{document}

\begin{abstract}
	We establish the global $C^{1, \alpha}$-regularity for functions in solution classes, whenever ellipticity constants are sufficiently close. As an application, we derive the global regularity result concerning the parabolic normalized $p$-Laplace equations, provided that $p$ is close to 2. Our analysis relies on the compactness argument with the iteration procedure.
\end{abstract}

\maketitle

%%%%%%%%%%%%%%%%%%%%%%%%%%%%%%%%%%%%%%%

\section{Introduction}
In this paper, we are concerned with regularity results for functions $u \in C(Q_1)$ satisfying the following inequalities in the viscosity sense:
\begin{equation}\label{eq:s*_class}
	\left\{\begin{aligned}
		u_t - \mathcal{M}^{-}_{\lambda, \Lambda}(D^2u) & \ge- \|f\|_{L^{\infty}(Q_1)} \\
		u_t -\mathcal{M}^{+}_{\lambda, \Lambda}(D^2u)  & \leq \|f\|_{L^{\infty}(Q_1)} 
	\end{aligned}\right.
	\quad \text{in $Q_1$},
\end{equation}
where $f \in C(Q_1) \cap L^{\infty}(Q_1)$ and $\mathcal{M}_{\lambda, \Lambda}^{\pm}$ are the Pucci's extremal operators. Here we define \textit{Pucci's extremal operators} as follows: for ellipticity constants $0<\lambda \leq \Lambda$ and $\mathcal{S}^n \coloneqq \{ M : \text{$M$ is a $n\times n$ real symmetric matrix\}}$,
\begin{align*}
	\mathcal{M}_{\lambda, \Lambda}^{+}(M)  \coloneqq \sup_{\lambda I \leq A \leq \Lambda I} \text{tr} (AM)  \quad \text{and} \quad
	\mathcal{M}^{-}_{\lambda, \Lambda}(M)  \coloneqq \inf_{\lambda I \leq A \leq \Lambda I} \text{tr} (AM).
\end{align*}
To be precise, we define the \textit{extended solution class} $S^{\ast}(\lambda, \Lambda, f)$ by the space of functions $u \in C(Q_1)$ satisfying \eqref{eq:s*_class} in the viscosity sense. In a similar way, we define the \textit{solution class} $S(\lambda, \Lambda, f)$ by the space of functions $u \in C(Q_1)$ satisfying 
\begin{equation} \label{eq:s_class}
	u_t -\mathcal{M}^{+}_{\lambda, \Lambda}(D^2u) \leq f \leq u_t - \mathcal{M}^{-}_{\lambda, \Lambda}(D^2u)   \quad \text{in $Q_1$}
\end{equation}
in the viscosity sense; see \Cref{sec:preliminaries} for details. We remark that if $u \in C(Q_1)$ is a viscosity solution of 
\begin{equation} \label{eq:model}
	u_t - F(D^2u, x,t)=f  \quad \text{in } Q_1
\end{equation}
for some $(\lambda, \Lambda)$-elliptic fully nonlinear operator $F$, then we have
$u \in S(\lambda, \Lambda, f(x,t)+F(0, x,t))$. On the other hand, there exists a function $u \in C(Q_1)$ which belongs to the solution class $S(\lambda, \Lambda, 0)$, but cannot be a viscosity solution of 
\begin{align*}
	u_t-F(D^2u)=0 \quad \text{in $Q_1$}
\end{align*}
for any $(\lambda, \Lambda)$-elliptic operators $F$. See \cite[Section 1.7.2]{Pim22} for the precise construction of such an example. 

Before we state our main theorem, we summarize several regularity results concerning the functions in the solution classes for both elliptic and parabolic settings:
\begin{enumerate}[(i)]	
	\item (Interior $W^{2, \delta}$-estimate, \cite{Caf89, CC95, Lin86, Wan92a}) If $u \in S(\lambda, \Lambda, f)$ in $Q_1$, then there exists a universal $\delta>0$ such that $u \in W^{2, \delta}(Q_{1/2})$ with the uniform estimate
	\begin{align*}
		\|u\|_{W^{2, \delta}(Q_{1/2})} \leq C\left(\|u\|_{L^{\infty}(Q_1)}+\|f\|_{L^{n+1}(Q_1)}\right).
	\end{align*}
	
	\item (Interior $C^{\alpha}$-estimate,  \cite{CC95, KS79, KS80, Wan92a}) If $u \in S(\lambda, \Lambda, f)$ in $Q_1$, then $u$ is locally H\"older continuous in $Q_1$ with the uniform estimate
	\begin{align*}
		\|u\|_{C^{\alpha}(\overline{Q_{1/2}})} \leq C\left(\|u\|_{L^{\infty}(Q_1)}+\|f\|_{L^{n+1}(Q_1)}\right),
	\end{align*}
	where $\alpha \in (0, 1)$ and $C>0$ are universal constants which depend only on $n$, $\lambda$, and $\Lambda$. The interior H\"older regularity result essentially comes from Harnack inequality.
	
	\item (Counterexamples, \cite{NV07, NV08, NV10}) 
	We consider the elliptic situation with $n \geq 5$, and define the elliptic solution class $S_{\textnormal{ell}}(\lambda, \Lambda, 0)$ by the space of functions $u \in C(B_1)$ satisfying 
	\begin{equation*}
		\mathcal{M}^{-}_{\lambda, \Lambda}(D^2u) \leq 0 \leq \mathcal{M}^{+}_{\lambda, \Lambda}(D^2u)   \quad \text{in $B_1$}
	\end{equation*}
	in the viscosity sense. For any $\alpha \in (0,1)$, there exist some $(\lambda, \Lambda)$-elliptic fully nonlinear operator $F$ and a viscosity solution $u$ of $F(D^2u)=0$ such that $u$ does not belong to $C^{1, \alpha}(B_1)$. Since the difference quotient $(u(x+he)-u(x))/h$ for $h>0$ and $e \in \mathbb{R}^n$ with $|e|=1$ is contained in $S_{\textnormal{ell}}(\lambda, \Lambda, 0)$, we conclude that functions in $S_{\textnormal{ell}}(\lambda, \Lambda, 0)$ are in general only H\"older continuous, and this interior H\"older regularity cannot be improved, at least for $n \geq 5$.
	
	\item (Boundary $C^{1, \alpha}$-estimate, \cite{ABV20, LZ20, SS14, Wan92b}) Let $\Omega$ be a bounded domain with $(0,0) \in \partial_p \Omega$, and $u \in S(\lambda, \Lambda, f)$ in $\Omega \cap Q_1$ with $u=g$ on $\partial_p \Omega \cap Q_1$. We also let $\alpha \in (0,\overline{\alpha})$ for a universal constant $\overline{\alpha} \in (0,1)$. If $g\in C^{1,\alpha}(0,0)$ and $\partial_p \Omega \cap Q_1 \in C^{1, \alpha}(0,0)$, then $u\in C^{1,\alpha}(0,0)$ with the pointwise estimate
	\begin{equation*}
		|u(x,t)-L(x)|\leq C (|x|+\sqrt{|t|})^{1+\alpha} \quad \text{for all } (x,t) \in \Omega \cap Q_1.
	\end{equation*}
	The universal constant $\overline{\alpha}$ is strongly related to the boundary regularity of solutions to the particular model problem, which will be specified soon.
\end{enumerate}

Our main goal in this paper is to establish the global $C^{1, \alpha}$-estimate for viscosity solutions $u$ of \eqref{eq:s*_class}, provided that the ellipticity constants $\lambda$ and $\Lambda$ are close enough, i.e., $\Lambda/\lambda-1\ll 1$. Our first main theorem is concerned with the interior H\"older regularity for the gradient.
% We expect that this $C^{1, \alpha}$-regularity for functions in the solution class $S^{\ast}(\lambda, \Lambda, f)$ will attract independent interests, besides its application to the parabolic normalized $p$-Laplace equations.
\begin{theorem}[Interior $C^{1, \alpha}$-estimate]\label{thm:main1}
	Let $\alpha \in (0, 1)$ and $u$ satisfy \eqref{eq:s*_class} with $f \in C(Q_1) \cap  L^{\infty}(Q_1)$. Then there exists $\delta>0$ depending only on $n$ and $\alpha$ such that if $\Lambda \leq (1+\delta)\lambda $, then $u \in C_{\textnormal{loc}}^{1, \alpha}(Q_1)$ such that
	\begin{align*}
		\|u\|_{C^{1,\alpha}(\overline{Q_{1/2}})} \leq C\left(\|u\|_{L^{\infty}(Q_1)}+\|f\|_{L^{\infty}(Q_1)}\right) , 
	\end{align*}
	where $C>0$ depends only on $n$, $\lambda$, and $\alpha$.
\end{theorem}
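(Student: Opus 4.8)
The plan is to run a Caffarelli-type compactness-and-iteration scheme, exploiting the fact that as $\Lambda \downarrow \lambda$ the two Pucci operators collapse to a single linear operator. Indeed, when $\Lambda = \lambda$ we have $\mathcal{M}^{+}_{\lambda,\lambda}(M) = \mathcal{M}^{-}_{\lambda,\lambda}(M) = \lambda\,\mathrm{tr}(M)$, so the defining inequalities \eqref{eq:s*_class} degenerate (in the limiting, forcing-free case) to the heat equation $u_t - \lambda\Delta u = 0$. Caloric functions are $C^{\infty}$, so heuristically, when $\Lambda/\lambda - 1 \ll 1$ and $f$ is small, any $u \in S^{\ast}(\lambda,\Lambda,f)$ is close to a caloric function and should inherit its first-order regularity up to a controlled error. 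First I would normalize: dividing the operators by $\lambda$ replaces $(\lambda,\Lambda)$ by $(1, \Lambda/\lambda)$, so I may assume $\lambda = 1$ and $\Lambda = 1+\delta$; a further scaling lets me assume $\|u\|_{L^{\infty}(Q_1)} \le 1$, and by replacing $u$ with a suitable normalization I reduce to the case where $\|f\|_{L^{\infty}(Q_1)}$ is as small as desired.

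The technical heart is an \emph{approximation lemma}: for every $\varepsilon > 0$ there is $\delta > 0$, depending only on $n$ and $\varepsilon$, such that if $\Lambda \le (1+\delta)\lambda$, $\|f\|_{L^{\infty}(Q_1)} \le \delta$, and $\|u\|_{L^{\infty}(Q_1)} \le 1$, then there exists a caloric function $h$ on $Q_{1/2}$ with $\|u - h\|_{L^{\infty}(Q_{1/2})} \le \varepsilon$. I would prove this by contradiction and compactness. If it fails, there is a sequence $u_k \in S^{\ast}(1, 1+\delta_k, f_k)$ with $\delta_k \to 0$, $\|f_k\|_{L^{\infty}} \to 0$, $\|u_k\|_{L^{\infty}(Q_1)} \le 1$, yet $\inf_h \|u_k - h\|_{L^{\infty}(Q_{1/2})} > \varepsilon$ for all caloric $h$. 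By the interior $C^{\alpha}$-estimate (ii), the $u_k$ are uniformly Hölder on compact subsets, so Arzel\`a--Ascoli produces a locally uniform limit $u_{\infty}$. The stability theory for parabolic viscosity solutions, combined with the uniform convergence $\mathcal{M}^{\pm}_{1,1+\delta_k} \to \Delta$ and the vanishing of the forcing, forces $u_{\infty}$ to solve $\partial_t u_{\infty} - \Delta u_{\infty} = 0$ in the viscosity sense; hence $u_{\infty}$ is caloric, contradicting $\|u_k - u_{\infty}\|_{L^{\infty}(Q_{1/2})} > \varepsilon$ for large $k$.

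With the lemma in hand, I would iterate on the parabolic dyadic scales. Using interior $C^2$-estimates for caloric functions, I approximate $h$ near the origin by its first-order spatial Taylor polynomial $\ell(x) = h(0,0) + D_x h(0,0)\cdot x$, with $\|h - \ell\|_{L^{\infty}(Q_r)} \le C_0 r^2$ in parabolic scaling. I fix $r \in (0, 1/2)$ small enough that $C_0 r^2 \le \tfrac12 r^{1+\alpha}$, then set $\varepsilon = \tfrac12 r^{1+\alpha}$ and let $\delta$ be the resulting constant from the lemma. A standard induction then yields affine-in-$x$ functions $L_k$ with
\begin{equation*}
	\|u - L_k\|_{L^{\infty}(Q_{r^k})} \le r^{k(1+\alpha)},
\end{equation*}
and with gradient increments $|D_x L_{k+1} - D_x L_k| \le C r^{k\alpha}$ summable in $k$. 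The inductive step rescales via $u_k(x,t) = r^{-k(1+\alpha)}\bigl(u - L_k\bigr)(r^k x,\, r^{2k} t)$; one checks that $u_k \in S^{\ast}(1, 1+\delta, \tilde f_k)$ with $\|\tilde f_k\|_{L^{\infty}(Q_1)} \le \delta$, so the approximation lemma applies at each scale and the estimate propagates. Summing the increments gives a limiting gradient and the pointwise $C^{1,\alpha}$-bound at the origin; translating the argument to every interior point, together with the normalization, delivers $u \in C^{1,\alpha}_{\mathrm{loc}}(Q_1)$ and the stated estimate on $Q_{1/2}$, with $C$ depending only on $n$, $\lambda$, and $\alpha$ after undoing the scaling.

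I expect the main obstacle to be the compactness step, specifically verifying that the limit $u_{\infty}$ is genuinely a viscosity solution of the heat equation. This requires the parabolic stability theory for viscosity solutions together with the locally uniform convergence of the extremal operators to $\lambda\Delta$ as $\Lambda \downarrow \lambda$, applied carefully on both sides of \eqref{eq:s*_class}. A secondary but nontrivial burden is the bookkeeping for the anisotropic parabolic rescaling: one must confirm that subtracting a spatial affine function and rescaling by $(r^k x, r^{2k} t)$ keeps $u_k$ within the same solution class $S^{\ast}(1, 1+\delta, \tilde f_k)$ with forcing still bounded by $\delta$, since the affine correction is killed by $D^2$ and $\partial_t$, while the parabolic scaling exponent $1+\alpha < 2$ ensures the rescaled forcing does not blow up.
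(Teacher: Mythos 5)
Your proposal is correct and takes essentially the same route as the paper: a compactness approximation lemma proved by contradiction (uniform H\"older estimates plus Arzel\`a--Ascoli, stability of viscosity solutions under $\mathcal{M}^{\pm}_{\lambda,\Lambda_k}\to\lambda\,\mathrm{tr}$ forcing the limit to solve the heat equation), followed by an iteration on parabolic dyadic scales with affine corrections whose increments are summable, exactly as in the paper's Lemma 3.1 and Theorem 3.2. The only cosmetic difference is that you first produce a nearby caloric function and then Taylor-expand it to get the affine approximation, whereas the paper's lemma outputs the affine function directly; the scaling bookkeeping, including the key observation that the rescaled forcing carries the harmless factor $r^{k(1-\alpha)}\le 1$, coincides.
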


Note that, in view of (ii) and (iii) above, we improved the interior regularity for functions in $S^{\ast}(\lambda, \Lambda, f)$ under the appropriate assumptions on the ellipticity constants. Furthermore, our regularity result for solution classes is optimal in the following sense:

 For $\delta >0$, let $\lambda=1$, $\Lambda=1+\delta$, and $f \equiv -2$. Then the function 
	\begin{equation*}
		u(x,t) =\begin{dcases}
			x_n^2 & (x_n<0) \\
			(1+\delta)^{-1} x_n^2 & (x_n\ge0) 
		\end{dcases}
	\end{equation*}
	belongs to $S(\lambda,\Lambda, f)$, but is not twice differentiable at $\{(x, t) :x_n=0\}$. Hence, we cannot expect that, in general, the function in the solution classes $S(\lambda, \Lambda, f)$ or $S^{\ast}(\lambda, \Lambda, f)$ to be more regular than $C^{1, 1}_{\mathrm{loc}}(Q_1)$, even though we further assume that $\Lambda$ is close enough to $\lambda$. In other words, \Cref{thm:main1} is the sharp regularity result that can be obtained for functions in the solution classes $S(\lambda, \Lambda, f)$ or $S^{\ast}(\lambda, \Lambda, f)$, when $\Lambda$ is close to $\lambda$.

We next state the boundary regularity result for functions in solution classes.
\begin{theorem}[Boundary $C^{1, \alpha}$-estimate]\label{thm:bdry}
	Let $\alpha \in (0, 1)$, $f\in C(Q_1) \cap L^{\infty}(Q_1)$, and $g\in C^{1,\alpha}(x_0,t_0)$ for some $(x_0,t_0) \in \partial_p Q_1$. Assume that $u$ satisfy 
	\begin{equation} \label{eq:s*_class_bd}
		\left\{\begin{aligned}
			u & \in S^*(\lambda,\Lambda, f) && \text{in } Q_1 \cap Q_1(x_0,t_0) \\
			u &= g && \text{on } \partial_p Q_1 \cap Q_1(x_0,t_0) .
		\end{aligned}\right.
	\end{equation}
	Then there exists $\delta>0$ depending only on $n$ and $\alpha$ such that if $\Lambda \leq (1+\delta)\lambda $, then $u \in C^{1, \alpha}(x_0,t_0)$, i.e., there exists a linear function $L$ such that
	\begin{align*}
		|u(x,t)-L(x)|\leq C(\|u\|_{L^{\infty}(Q_1)}+\|f\|_{L^{\infty}(Q_1)}+ \|g\|_{C^{1,\alpha}(x_0,t_0)})(|x-x_0|+\sqrt{|t-t_0|})^{1+\alpha} 
	\end{align*}
	for all $(x,t) \in Q_1 \cap Q_1(x_0,t_0)$ and 
	\begin{equation*}
		|Du(0)| \leq C(\|u\|_{L^{\infty}(Q_1)}+\|f\|_{L^{\infty}(Q_1)}+ \|g\|_{C^{1,\alpha}(x_0,t_0)}),
	\end{equation*}
	where $C>0$ depends only on $n$, $\lambda$, and $\alpha$.
\end{theorem}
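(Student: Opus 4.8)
The plan is to run the same compactness-and-iteration scheme that proves \Cref{thm:main1}, but localized at the boundary and with the \emph{heat equation} as the limiting model. After translating so that $(x_0,t_0)=(0,0)$ and dividing by $\|u\|_{L^\infty(Q_1)}+\|f\|_{L^\infty(Q_1)}+\|g\|_{C^{1,\alpha}(0,0)}$, I may assume this quantity is $\le1$. Subtracting the first-order Taylor polynomial of $g$ at the origin — an affine-in-$x$, time-independent function, which lies in the kernel of $\partial_t-\mathcal M^{\pm}_{\lambda,\Lambda}$ and therefore keeps $u$ in $S^{*}(\lambda,\Lambda,f)$ — I reduce to boundary data with $|g(x,t)|\le(|x|+\sqrt{|t|})^{1+\alpha}$ near the origin. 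Flattening the smooth (hence $C^{1,\alpha}$) parabolic boundary of $Q_1$ near the origin by a regular change of variables turns the problem into a half-cylinder problem whose limiting equation, as $\Lambda/\lambda\to1$, is the constant-coefficient heat equation $h_t-\lambda\Delta h=0$.

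The core is an approximation lemma proved by compactness: for every $\varepsilon>0$ there is $\delta>0$ such that if $\Lambda\le(1+\delta)\lambda$, $\|f\|_{L^\infty}\le\delta$, and the boundary datum is $\le\delta$ in the relevant norm, then any $u\in S^{*}(\lambda,\Lambda,f)$ in the half-cylinder with $\|u\|_{L^\infty}\le1$ lies within $\varepsilon$ in $L^\infty$ of a solution $h$ of the heat equation with the same boundary data. I would argue by contradiction: a failing sequence $u_k$ with $\Lambda_k/\lambda_k\to1$, $f_k\to0$, $g_k\to0$ is precompact in $C$ by the interior and boundary H\"older estimates quoted in (ii) and (iv), and its uniform limit is, by the stability of viscosity solutions together with the convergence of the Pucci operators $\mathcal M^{\pm}_{\lambda_k,\Lambda_k}$ to $\lambda\,\mathrm{tr}$, a heat solution attaining the limiting (zero) boundary data, contradicting the separation. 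The decisive point is that the heat equation enjoys boundary $C^{1,\gamma}$-regularity for \emph{every} $\gamma\in(0,1)$ with $C^{1,\gamma}$ data; fixing $\gamma\in(\alpha,1)$ and subtracting the affine-in-$x$ part $L$ of $h$ gives $\sup_{Q_\rho}|h-L|\le C_0\rho^{1+\gamma}$, so choosing $\rho$ small with $C_0\rho^{1+\gamma}\le\tfrac12\rho^{1+\alpha}$ and then $\varepsilon\le\tfrac12\rho^{1+\alpha}$ yields the one-step improvement $\sup_{Q_\rho}|u-L|\le\rho^{1+\alpha}$. It is exactly this unrestricted range $\gamma<1$ (as opposed to the limited $\overline{\alpha}$ of (iv) at fixed $\Lambda/\lambda$) that lets $\alpha$ run over all of $(0,1)$.

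I would then iterate. The contraction $\sup_{Q_\rho}|u-L|\le\rho^{1+\alpha}$ means $u_1(x,t)=\rho^{-(1+\alpha)}\bigl(u(\rho x,\rho^2 t)-L(\rho x)\bigr)$ again satisfies $\|u_1\|_{L^\infty(Q_1)}\le1$; since $L$ is affine in $x$ it is annihilated by $\partial_t-\mathcal M^{\pm}_{\lambda,\Lambda}$, so $u_1\in S^{*}(\lambda,\Lambda,f_1)$ with $f_1(x,t)=\rho^{1-\alpha}f(\rho x,\rho^2 t)$, whence $\|f_1\|_{L^\infty}\le\rho^{1-\alpha}\|f\|_{L^\infty}$, and the rescaled boundary datum contracts by the same factor $\rho^{1-\alpha}$. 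Because $\alpha<1$ each datum \emph{decreases}, so the hypotheses of the approximation lemma are preserved at every stage. Iterating produces affine functions $L_k(x)=a_k+b_k\cdot x$ with $\sup_{Q_{\rho^k}}|u-L_k|\le\rho^{k(1+\alpha)}$ and $|a_{k+1}-a_k|+\rho^k|b_{k+1}-b_k|\lesssim\rho^{k(1+\alpha)}$; summing the geometric series forces $a_k\to a$, $b_k\to b$ and yields both the pointwise expansion $|u(x,t)-L(x)|\le C(|x|+\sqrt{|t|})^{1+\alpha}$ with $L(x)=a+b\cdot x$ and the gradient bound $|Du(0)|=|b|\le C$. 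Undoing the normalizations restores the stated constants.

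The main obstacle is the approximation lemma, specifically the matching of boundary data in the compactness limit and the verification that the limit attains its boundary values continuously; this needs the quantitative boundary H\"older estimate of (iv) up to $\partial_p Q_1$, uniform in $k$, together with a barrier argument near the boundary point to preclude loss of boundary data in the limit. A secondary technical point is that the single $\delta$ furnished by the lemma must be small enough to serve simultaneously at every step of the iteration for the fixed contraction factor $\rho=\rho(n,\alpha)$, which is what pins the dependence of $\delta$ on $n$ and $\alpha$ alone.
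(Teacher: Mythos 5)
Your skeleton is the paper's: normalize, subtract the affine part of $g$ (which indeed stays inside $S^*(\lambda,\Lambda,f)$), prove a boundary approximation lemma by compactness with the heat equation as the limit, iterate with a fixed contraction factor, and sum the geometric series. The paper's version of your approximation lemma is \Cref{lem:approx_c1a_bd}, and its iteration is \Cref{thm:u-Lk_bd}; where you invoke boundary $C^{1,\gamma}$-regularity of the heat equation for every $\gamma\in(\alpha,1)$, the paper gets the same one-step improvement more directly by reflecting the limit $\overline{u}$ (zero data on the flat piece) across $\{x_n=0\}$ and using the \emph{interior} quadratic expansion $|\widetilde u - D_n\widetilde u(0,0)\,x_n|\le C(|x|+\sqrt{|t|})^2$, which is exactly how the unrestricted range of $\alpha$ enters. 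A cosmetic difference: because the limit vanishes on $\{x_n=0\}$, the paper's approximants are pure normal modes $a_k x_n$ rather than your full affine $L_k$; this matters for the bookkeeping below.

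The one step in your plan that does not survive scrutiny as stated is the initial flattening of $\partial_p Q_1$ ``by a regular change of variables.'' The class $S^*(\lambda,\Lambda,f)$ is not invariant under a non-affine diffeomorphism: writing $v=u\circ\Phi^{-1}$, the Hessian transforms as $D^2_x u = D\Phi^{T}(D^2_y v)D\Phi + (Dv)\cdot D^2\Phi$, so you acquire a bounded drift term (not present in the definition of $S^*$, and your compactness step quotes estimates for the drift-free class) \emph{and} you distort the ellipticity ratio by the factor $\mathrm{cond}(D\Phi^T D\Phi)$. The second point is fatal for a fixed flattening map, because your whole hypothesis is $\Lambda/\lambda\le 1+\delta$ with $\delta=\delta(n,\alpha)$ small: after flattening you can no longer guarantee the transformed class is $(1+\delta)$-pinched, and worse, the distortion at scale $\eta^k$ does not improve under your rescaling since the map is fixed. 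The repair is either to flatten only after zooming to $Q_\rho$ so that $D\Phi=I+O(\rho)$ and the drift scales away along the iteration (and to re-prove the approximation lemma for the class with small drift), or to do what the paper does: never flatten, and instead build the almost-flatness into the approximation lemma as the hypothesis $\osc_{Q_1}\partial_p\Omega\le\delta$, with a failing sequence satisfying $\osc_{Q_1}\partial_p\Omega_k\le 1/k$ so that the limit lives on the exact half-cylinder $Q_1^+$. This choice has a knock-on effect you would also need to handle: on a non-flat boundary the subtracted mode $a_k x_n$ no longer vanishes on $\partial_p\Omega$, so the rescaled boundary datum picks up the term $a_k x_n/r^{1+\alpha}$; the paper controls it through the a priori bound $|a_k|\le C_1/(1-\eta^{\alpha})$ from \eqref{eq:lin2_bd} together with the smallness $\|\partial_p\Omega\|_{C^{1,\alpha}(0,0)}\le \frac{1-\eta^{\alpha}}{2C_1}\delta$, a compatibility between the iteration and the geometry that your flattened picture hides rather than eliminates.
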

We would like to illustrate our pointwise boundary estimate in view of the preceding literature such as \cite{ABV20, AS23, Kry83, LWZ20, LZ20, LZ22, SS14, Wan92b}. To be precise, we first consider the simple `model' problem: let $u$ belong to $S(\lambda, \Lambda, 0)$ in $Q_1 \cap \{x_n>0\}$ with a zero boundary condition on a flat boundary $Q_1 \cap \{x_n=0\}$. An application of Harnack inequality, Lipschitz estimate, and Hopf principle yields that there exists a universal constant $\overline{\alpha} \in (0,1)$ depending only on $n$, $\lambda$, and $\Lambda$ such that $u$ is $C^{1, \overline{\alpha}}$ at boundary points under some structural conditions. We remark that it is difficult to determine the precise value of $\overline{\alpha}$ due to the implicit construction of barrier functions in this step. Then by the compactness argument, the pointwise boundary $C^{1, \alpha}$-regularity result for the general case can be achieved whenever $\alpha \in (0, \overline{\alpha})$. In this context, our result can be interpreted as follows: the universal constant $\overline{\alpha}$ converges to $1$ when $\Lambda/\lambda-1$ goes to $0$.

We shortly display our strategy to prove \Cref{thm:main1} and \Cref{thm:bdry}. Indeed, the proofs of two theorems are based on the compactness argument together with the iteration method; see \cite{IS13, Tei15, Wan92b, WN23} for example. For the interior case, if $\|f\|_{L^{\infty}(Q_1)}$ is small enough and $\lambda$, $\Lambda$ are close enough, then we can expect the relation ``$S^{\ast}(\lambda, \Lambda, f) \approx S^{\ast}(\lambda, \lambda, 0)$". Since $u \in S^{\ast}(\lambda, \lambda, 0)$ if and only if $u_t-\lambda \Delta u=0$ in $Q_1$, we capture the smooth property of the limit function, and then show the approximation lemma which plays a crucial role in the iteration scheme. For the boundary case, we further take the $C^{1, \alpha}$-norm of $g$ and $\partial_p \Omega$ sufficiently small to arrive at the simple model problem.

We now move our attention to several applications of \Cref{thm:main1} and \Cref{thm:bdry}. We begin with the global estimates for functions in the solution class. In short, this global result can be derived from \Cref{thm:main1} and \Cref{thm:bdry} together with the machinery developed in \cite[Theorem 3.1]{Wan92b}.

\begin{theorem}[Global $C^{1, \alpha}$-estimate]\label{thm:global}
	Let $\alpha \in(0,1)$, $f\in C(Q_1) \cap L^{\infty}(Q_1)$, and $g\in C^{1,\alpha}(\partial_p Q_1)$. Assume that $u \in C(\overline{Q_1})$ satisfy 
		\begin{equation} \label{eq:diric}
		\left\{\begin{aligned}
			u & \in S^*(\lambda,\Lambda, f) && \text{in } Q_1 \\
			u &= g && \text{on } \partial_p Q_1.
		\end{aligned}\right.
	\end{equation}
	Then there exists $\delta>0$ depending only on $n$ and $\alpha$ such that if $\Lambda \leq (1+\delta)\lambda $, then $u \in C^{1, \alpha}(\overline{Q_1})$ with the uniform estimate
	\begin{align*}
		\|u\|_{C^{1, \alpha}(\overline{Q_1})} \leq C( \|u\|_{L^{\infty}(Q_1)} + \|f\|_{L^{\infty}(Q_1)} +  \|g\|_{C^{1,\alpha}(\partial_pQ_1)}),
	\end{align*}
	where $C>0$ is a constant depending only on $n$, $\lambda$, and $\alpha$.
\end{theorem}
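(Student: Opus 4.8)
The plan is to combine the interior estimate of \Cref{thm:main1} with the boundary estimate of \Cref{thm:bdry} through the patching device of \cite[Theorem 3.1]{Wan92b}. Throughout, abbreviate $M \coloneqq \|u\|_{L^{\infty}(Q_1)}+\|f\|_{L^{\infty}(Q_1)}+\|g\|_{C^{1,\alpha}(\partial_p Q_1)}$ and set $\delta \coloneqq \min\{\delta_1,\delta_2\}$, where $\delta_1,\delta_2$ are the thresholds produced by \Cref{thm:main1} and \Cref{thm:bdry}; since both depend only on $n$ and $\alpha$, so does $\delta$, and we assume $\Lambda \le (1+\delta)\lambda$. For $z=(x,t)$ and $\zeta=(\xi,\tau)$ write $d_p(z,\zeta)\coloneqq |x-\xi|+\sqrt{|t-\tau|}$ for the parabolic distance, and for an interior point $z\in Q_1$ put $d(z)\coloneqq d_p(z,\partial_p Q_1)$, choosing $\zeta_z\in\partial_p Q_1$ to realize this distance. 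The constant $C$ below depends only on $n,\lambda,\alpha$ and may change from line to line.

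First I would record the boundary data in a usable form. Applying \Cref{thm:bdry} at each $\zeta\in\partial_p Q_1$ (its localized hypotheses hold in $Q_1\cap Q_1(\zeta)$, and $\|g\|_{C^{1,\alpha}(\zeta)}\le M$) yields a spatial linear function $L_\zeta$ with $|DL_\zeta|\le CM$ and
\[
|u(z)-L_\zeta(z)| \le CM\, d_p(z,\zeta)^{1+\alpha} \qquad \text{for all } z\in\overline{Q_1}.
\]
Comparing the expansions at two boundary points $\zeta_1,\zeta_2$ and subtracting gives $|L_{\zeta_1}(z)-L_{\zeta_2}(z)|\le CM\bigl(d_p(z,\zeta_1)^{1+\alpha}+d_p(z,\zeta_2)^{1+\alpha}\bigr)$. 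Since $L_{\zeta_1}-L_{\zeta_2}$ is affine in $x$, evaluating this over a spatial ball of radius $r\sim d_p(\zeta_1,\zeta_2)$ and using that the gradient of an affine function is bounded by $r^{-1}$ times its oscillation over such a ball, I obtain the boundary gradient estimate $|DL_{\zeta_1}-DL_{\zeta_2}|\le CM\, d_p(\zeta_1,\zeta_2)^{\alpha}$.

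Next I would pass to interior points by rescaling. Fix $z\in Q_1$, set $d\coloneqq d(z)$, and consider $w\coloneqq u-L_{\zeta_z}$, which lies in the same class $S^{\ast}(\lambda,\Lambda,f)$ because subtracting a spatial linear function changes neither $u_t$ nor $D^2u$. By the previous step, $\|w\|_{L^{\infty}(Q_{d/2}(z))}\le CM d^{1+\alpha}$, as every $y\in Q_{d/2}(z)\subset Q_1$ satisfies $d_p(y,\zeta_z)\le Cd$. Rescaling the concentric subcylinder to unit size, applying \Cref{thm:main1}, and rescaling back — using $d^2\|f\|_{L^{\infty}}\le d^{1+\alpha}M$ for $d\le 1$ and $\alpha<1$ — I get the two scale-invariant bounds
\[
[Du]_{C^{\alpha}(Q_{d/4}(z))} \le CM \qquad\text{and}\qquad |Du(z)-DL_{\zeta_z}| \le CM d^{\alpha}.
\]
The cancellation of all powers of $d$ is the crux of the argument: the $d^{1+\alpha}$ decay of $w$ exactly offsets the $d^{-1-\alpha}$ blow-up of the rescaled $C^{1,\alpha}$-seminorm, producing a constant independent of the distance to the boundary. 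In particular $\|Du\|_{L^{\infty}(Q_1)}\le CM$, and since $|Du(z)-DL_{\zeta_z}|\to 0$ as $d(z)\to 0$, the gradient extends continuously to $\overline{Q_1}$.

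Finally I would glue the estimates. Given interior points $z_1,z_2$ with $d(z_1)\ge d(z_2)$, I split into two cases. If $d_p(z_1,z_2)\le d(z_1)/8$, both points lie in a parabolic cylinder of radius comparable to $d(z_1)$ on which $[Du]_{C^{\alpha}}\le CM$, whence $|Du(z_1)-Du(z_2)|\le CM\, d_p(z_1,z_2)^{\alpha}$. If $d_p(z_1,z_2)>d(z_1)/8$, I insert the boundary gradients,
\[
|Du(z_1)-Du(z_2)| \le |Du(z_1)-DL_{\zeta_{z_1}}| + |DL_{\zeta_{z_1}}-DL_{\zeta_{z_2}}| + |DL_{\zeta_{z_2}}-Du(z_2)|,
\]
and bound the three terms by $CM d(z_1)^{\alpha}$, $CM d_p(\zeta_{z_1},\zeta_{z_2})^{\alpha}$, and $CM d(z_2)^{\alpha}$, each of which is $\le CM\, d_p(z_1,z_2)^{\alpha}$ since $d(z_i)\le 8\,d_p(z_1,z_2)$ and $d_p(\zeta_{z_1},\zeta_{z_2})\le d(z_1)+d_p(z_1,z_2)+d(z_2)\le C\,d_p(z_1,z_2)$. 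Together with the $L^{\infty}$ bounds on $u$ and $Du$ and the continuity up to the boundary, this yields $\|u\|_{C^{1,\alpha}(\overline{Q_1})}\le CM$. The main obstacle is the rescaling bookkeeping of the third paragraph combined with the far-apart case here: one must verify that every power of $d$ and of $d_p(\zeta_{z_1},\zeta_{z_2})$ collapses to $d_p(z_1,z_2)^{\alpha}$. A secondary technical point is the non-smooth corner where the lateral and bottom parts of $\partial_p Q_1$ meet, but this is already accommodated by the purely pointwise nature of \Cref{thm:bdry}.
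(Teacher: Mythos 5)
Your proposal is correct and takes essentially the same route as the paper: the paper's own proof consists precisely of observing that $S^{\ast}(\lambda,\Lambda,f)$ is invariant under subtraction of linear functions and then combining \Cref{thm:main1} and \Cref{thm:bdry} via the patching machinery of \cite[Theorem 3.1]{Wan92b}, which is exactly the argument you have written out in full (boundary expansions, gradient comparison of the $L_\zeta$'s, interior rescaling of $u-L_{\zeta_z}$ with the scale-invariant cancellation of $d^{1+\alpha}$, and the near/far gluing). The one point you gloss over is that at bottom points of $\partial_p Q_1$ the set $Q_1 \cap Q_1(\zeta)$ appearing in \Cref{thm:bdry} is empty, so the boundary expansion there must be taken forward in time (as in Wang's formulation) for your step bounding $\|u-L_{\zeta_z}\|_{L^{\infty}(Q_{d/2}(z))}$ to apply near the initial time slice — but this is the same level of detail the paper itself delegates to the citation.
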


Finally, such global $C^{1, \alpha}$-estimate for functions in solution classes can be applied to describe the regular property for viscosity solutions of the parabolic normalized $p$-Laplace equations given by
\begin{align}\label{eq:normalized}
	u_t=\Delta_p^Nu\coloneqq |Du|^{2-p}\,\text{div}(|Du|^{p-2}Du) ,
\end{align}
where $1<p<\infty$. The equation \eqref{eq:normalized} arises from tug-of-war-like stochastic games with noise; see \cite{ETT15, MPR10, PSSW09, PS08} and references therein. The existence, uniqueness, and regularity for viscosity solutions of \eqref{eq:normalized} has been widely studied in both elliptic and parabolic settings recently; we refer to \cite{AS22, Att20, APR17, AR20, BG13, BG15, Doe11, FZ21, HL20, JS17}. In particular, Jin and Silvestre \cite{JS17} developed the interior $C^{1, \alpha}$-regularity of $u$ for some $\alpha \in (0,1)$ and for any $p \in (1, \infty)$. On the other hand, Andrade and Santos proved \cite{AS22} the interior $C^{1, \alpha}$-regularity of $u$ for given $\alpha \in (0,1)$ and for $p$ close to 2; i.e., $|p-2|<\varepsilon=\varepsilon(n, \alpha)$. It is noteworthy that \Cref{thm:main1} together with the regularization scheme provides a new proof for \cite[Theorem 1.1]{AS22}; see \Cref{sec:normalized} for details.

An application of \Cref{thm:global} shows the global $C^{1, \alpha}$-regularity of viscosity solutions of \eqref{eq:normalized} for any $\alpha \in (0,1)$ when $p$ is close to 2.

\begin{theorem} [Parabolic normalized $p$-Laplace equations] \label{thm:normalized}
	Let $\alpha\in(0,1)$, $f\in C(Q_1) \cap L^{\infty}(Q_1)$, and $g\in C^{1,\alpha}(\overline{Q_1})$. Assume that $u \in C(\overline{Q_1)}$ is a viscosity solution of 
	\begin{equation*}
		\left\{\begin{aligned}
			u_t - \Delta_p^Nu &= f && \text{in } Q_1\\
			u&=g && \text{on } \partial_p Q_1 .
		\end{aligned}\right.
	\end{equation*}
	Then there exists $\delta>0$ depending only on $n$ and $\alpha$ such that if $|p-2|<\delta$, then $u \in C^{1, \alpha} (\overline{Q_1})$ with the uniform estimate
	\begin{align*}
		\|u\|_{C^{1, \alpha}(\overline{Q_1})} \leq C( \|u\|_{L^{\infty}(Q_1)} + \|f\|_{L^{\infty}(Q_1)} +  \|g\|_{C^{1,\alpha}(\overline{Q_1})}),
	\end{align*}
	where $C>0$ is a constant depending only on $n$, $p$, and $\alpha$.
\end{theorem}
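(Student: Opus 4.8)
The plan is to reduce \Cref{thm:normalized} to \Cref{thm:global} by showing that every viscosity solution of the parabolic normalized $p$-Laplace equation lies in an extended solution class whose ellipticity ratio tends to $1$ as $p\to 2$. As a first step I would put the operator in non-divergence form: for $\xi\neq 0$ a direct computation yields
\begin{equation*}
	\Delta_p^N u = \operatorname{tr}\bigl( A(Du)\, D^2u \bigr), \qquad A(\xi) \coloneqq I + (p-2)\frac{\xi \otimes \xi}{|\xi|^2},
\end{equation*}
and the symmetric matrix $A(\xi)$ has eigenvalues $1$ (with multiplicity $n-1$) and $p-1$ (with multiplicity one). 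Hence, with $\lambda \coloneqq \min\{1, p-1\}$ and $\Lambda \coloneqq \max\{1, p-1\}$, one has $\lambda I \leq A(\xi) \leq \Lambda I$ for all $\xi \neq 0$, and, most importantly, $\Lambda/\lambda = \max\{p-1, (p-1)^{-1}\} \to 1$ as $p \to 2$.

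Next I would verify the inclusion $u \in S^{*}(\lambda, \Lambda, f)$. Away from the critical set $\{Du = 0\}$ this is immediate from the Pucci sandwich: whenever a test function $\varphi$ touches $u$ from above (resp.\ below) at a point with $D\varphi \neq 0$, the equation gives $\varphi_t - \operatorname{tr}(A(D\varphi)D^2\varphi) \leq f$ (resp.\ $\geq f$), and since $A(D\varphi)$ is admissible we obtain $\varphi_t - \mathcal{M}^{+}_{\lambda, \Lambda}(D^2\varphi) \leq \varphi_t - \operatorname{tr}(A(D\varphi)D^2\varphi) \le f$, and symmetrically with $\mathcal{M}^{-}_{\lambda,\Lambda}$. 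The genuinely delicate point---and the step I expect to be the main obstacle---is the behaviour at points where the test function has vanishing gradient, since the normalized operator is discontinuous there. I would treat this through the upper and lower semicontinuous envelopes of $\xi \mapsto \operatorname{tr}(A(\xi)M)$ at $\xi = 0$: these envelopes equal $\operatorname{tr}(A_{\hat e}M)$ for rank-one perturbations $A_{\hat e} = I + (p-2)\,\hat e \otimes \hat e$ realising the extremal values of the Rayleigh quotient $\hat e \mapsto \langle M\hat e, \hat e\rangle$, and each such $A_{\hat e}$ still satisfies $\lambda I \leq A_{\hat e} \leq \Lambda I$. Consequently the viscosity inequalities at $\{Du = 0\}$ remain sandwiched between the Pucci extremal operators, and the inclusion $u \in S(\lambda, \Lambda, f) \subset S^{*}(\lambda, \Lambda, f)$ follows. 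Making this rigorous requires invoking the equivalence between the (restricted) viscosity definition used for \eqref{eq:normalized} and the envelope definition; alternatively, one may regularize the coefficient matrix $A(\xi)$ to remove the singularity, derive uniform estimates for the regularized solutions, and pass to the limit.

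Finally I would apply \Cref{thm:global}. Given $\alpha \in (0,1)$, let $\delta_0 = \delta_0(n, \alpha) > 0$ be the threshold furnished by \Cref{thm:global}, so that the global estimate holds once $\Lambda \leq (1+\delta_0)\lambda$. Choosing $\delta \coloneqq \delta_0/(1+\delta_0)$, a short computation shows that $|p - 2| < \delta$ forces $\max\{p-1, (p-1)^{-1}\} < 1 + \delta_0$, that is, $\Lambda \leq (1+\delta_0)\lambda$; note that $\delta$ depends only on $n$ and $\alpha$, and that $|p-2|<\delta<1$ guarantees $p>1$ so that $\lambda>0$. Since $g \in C^{1,\alpha}(\overline{Q_1})$ restricts to an element of $C^{1,\alpha}(\partial_p Q_1)$, \Cref{thm:global} applies to $u \in S^{*}(\lambda, \Lambda, f)$ with boundary data $g$ and yields $u \in C^{1,\alpha}(\overline{Q_1})$ together with the stated estimate. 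The constant $C$ produced by \Cref{thm:global} depends on $n$, $\lambda$, and $\alpha$, which under $\lambda = \min\{1, p-1\}$ amounts to dependence on $n$, $p$, and $\alpha$, as claimed.
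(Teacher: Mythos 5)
Your proof is correct, but it takes a genuinely different route from the paper. You work directly with the viscosity solution $u$: using the envelope-based definition of viscosity solutions for \eqref{eq:n_p_lap}, you verify the inclusion $u \in S(\lambda_p,\Lambda_p,f) \subset S^{*}(\lambda_p,\Lambda_p,f)$ at both regular and critical points of the test function, and then invoke \Cref{thm:global}. Note that the ``delicate point'' you flag is actually resolved by fiat in this paper: \Cref{vis2} \emph{is} the envelope definition, prescribing exactly the inequalities $\varphi_t - \Delta\varphi - (p-2)e_{\max}(D^2\varphi) \leq f$ (resp.\ with $e_{\min}$) at points where $D\varphi$ vanishes, so your rank-one computation $\operatorname{tr}(M)+(p-2)e_{\max}(M)=\operatorname{tr}(A_{\hat e}M)\leq \mathcal{M}^{+}_{\lambda_p,\Lambda_p}(M)$ closes the argument with no equivalence theorem needed; your hedge about invoking such an equivalence is unnecessary. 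The paper instead takes the regularization route you mention only as a fallback: it solves the $\varepsilon$-regularized problem \eqref{eq:approx} classically (citing \cite{LSU68, Lie96}), notes that the smooth solutions $v^{\varepsilon}$ trivially belong to $S^{*}(\lambda_p,\Lambda_p,f^{\varepsilon})$ since $Dv^{\varepsilon}$ is everywhere finite and $a^{\varepsilon}_{ij}$ is uniformly elliptic even at vanishing gradient, derives the uniform $C^{1,\alpha}$-bound from \Cref{thm:global}, and identifies the limit with $u$ via Arzela--Ascoli, the stability theorem, and the comparison principle. The trade-off: your argument is shorter and avoids classical solvability, mollification, and in particular any appeal to uniqueness/comparison for the singular equation, but it leans entirely on the specific envelope definition adopted; the paper's approximation scheme is more robust to the choice of definition (one only needs stability of viscosity solutions under uniform limits) and, as the authors note, simultaneously furnishes a new proof of the interior result of \cite{AS22}, at the cost of requiring the comparison principle to conclude $u \equiv v$. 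Your threshold $\delta = \delta_0/(1+\delta_0)$ and the bookkeeping $\Lambda_p/\lambda_p = \max\{p-1,(p-1)^{-1}\} \to 1$ as $p \to 2$, as well as the reduction of the constant's dependence to $n$, $p$, $\alpha$, are all correct.
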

When we investigate the parabolic normalized $p$-Laplace equations \eqref{eq:normalized}, the essential challenge arises from the fact that the coefficient matrix strongly depends on the gradient of viscosity solutions $u$. In particular, if we aim at the $C^{1, \alpha}$-estimate by finding a linear function $L$ which approximates $u$, then the key step is to iterate a similar argument for the difference function $u-L$. However, the quasilinear structure of \eqref{eq:normalized} leads to the translation of the original equation. We overcome such difficulty by understanding $u$ as functions in the solution class $S^{\ast}(\lambda, \Lambda, f)$ which is invariant under the replacement of $u$ by $u-L$. We also expect that this strategy can be widely employed to remove the dependence of $Du$ in certain equations, whenever the a priori Lipschitz estimate for $u$ is known.

It is noteworthy that our theorems \Cref{thm:main1}-\Cref{thm:normalized} still hold for the elliptic setting, after the essentially same analysis. Moreover, our approach for parabolic normalized $p$-Laplace equations is flexible enough so that it is also available for more general situations. For example, the global $C^{1, \alpha}$-regularity result is still valid for parabolic normalized $p(x, t)$-Laplace equations whenever $(x, t) \mapsto p(x, t)$ is Lipschitz and $|p(x, t)-2|$ is sufficiently small. We note that the proof is similar to the one for \Cref{thm:normalized} with small modifications; see \Cref{sec:normalized} for details. For related results on parabolic $\Delta_{p(x, t)}^N$ operators, we refer to the recent paper \cite{FZ21}, where they showed the interior $C^{1, \alpha}$-regularity for small $\alpha \in (0,1)$ provided that $p(x, t)$ belongs to $C^1$.

The paper is organized as follows. In \Cref{sec:preliminaries}, we summarize several notations and definitions which will be used throughout the paper. \Cref{sec:solutionclass} is devoted to the proof of \Cref{thm:main1} and \Cref{thm:bdry} for solution classes. Finally, we provide the applications of \Cref{thm:main1} and \Cref{thm:bdry}: global estimates for parabolic normalized $p$-Laplace equations.

\section{Preliminaries}\label{sec:preliminaries}
%==========================================================
%					Subsection: Notations
%==========================================================
In this section, we summarize some basic notations and gather definitions used throughout the paper. 

For a point $(x_0,t_0) \in \mathbb{R}^{n+1}$ and $r>0$, we denote the cylinder as
\begin{align*}
	Q_r(x_0,t_0) = B_r(x_0) \times (t_0 -r^2 , t_0] \quad \text{and} \quad
	Q_r^+(x_0,t_0) = \{(x,t) \in Q_r(x_0,t_0): x_n >0 \}.
\end{align*}
Moreover, we define the parabolic boundary as
\begin{align*}
	\partial_p Q_r(x_0,t_0) & = (\overline{B_r(x_0)} \times \{ t= t_0-r^2 \}) \cup(\partial B_r(x_0) \times [t_0 -r^2 , t_0)). 
\end{align*}
For convenience, we denote $Q_r = Q_r(0,0)$ and  $Q_r^+ = Q_r^+(0,0)$.

We denote partial derivatives of $u$ as subscriptions.
\begin{equation*}
	u_t = \partial_t u = \frac{\partial u}{\partial t} , \quad  D_iu =\frac{\partial u}{\partial x_i} , \quad \text{and} \quad  D_{ij} u = \frac{\partial^2 u}{\partial x_i \partial x_j} .
\end{equation*}

\subsection{Hölder spaces}
\begin{definition}[Hölder spaces]
	Let $\Omega \subset \mathbb{R}^{n+1}$ be an open set and $\alpha \in (0,1)$.
	\begin{enumerate}[(i)]
		\item $u \in C^{\alpha}(\overline{\Omega})$ means that there exists $C>0$ such that 
		$$|u(x,t)-u(y,s)| \leq C(|x-y|+\sqrt{|t-s|})^{\alpha} \quad \text{for all } (x,t), (y,s) \in \Omega.$$
		In other words, $u$ is $\frac{\alpha}{2}$-H\"older continuous in $t$ and  $\alpha$-H\"older continuous in $x$.
		\item $u \in C^{1,\alpha}(\overline{\Omega})$ means that $u$ is $\frac{\alpha + 1}{2}$-H\"older continuous in $t$ and $Du$ is $\alpha$-H\"older continuous in $x$.
	\end{enumerate}
\end{definition}

\begin{definition}
	Let $A \in \mathbb{R}^{n+1}$ be a bounded set and $f : A \to \mathbb{R}$ be a function. We say that $f$ is $C^{k,\alpha}$ at $(x_0,t_0) \in A$ (denoted by $f \in C^{k,\alpha}(x_0,t_0)$), if there exist constant $C>0$, $r>0$, and a polynomial $P(x,t)$ with $\deg P \leq k$ such that 
	\begin{equation}\label{cka_f}
		|f(x,t)-P(x,t)| \leq C(|x-x_0|+\sqrt{|t-t_0|})^{k+\alpha} \quad \text{for all } (x,t) \in A \cap Q_r(x_0,t_0).
	\end{equation}
	We define 
	\begin{align*}
		[f]_{C^{k,\alpha}(x_0,t_0)} &\coloneqq \inf \{C>0 : \eqref{cka_f} \text{ holds with } P(x,t) \text{ and } A \}, \\
		\|f\|_{C^{k,\alpha}(x_0,t_0)} &\coloneqq [f]_{C^{k,\alpha} (x_0,t_0)} + \sum_{i=0}^k \|D^i P(0,0)\|.
	\end{align*}
\end{definition}

\begin{definition}
	Let $\Omega$ be a bounded domain. We say that $\partial_p \Omega$ is $C^{k,\alpha}$ at $(x_0,t_0) \in \partial_p \Omega$ (denoted by $\partial_p \Omega \in C^{k,\alpha}(x_0,t_0)$), if there exist constant $C>0$, $r>0$, a new coordinate system $\{x_1,\cdots,x_n,t\}$, and a polynomial $P(x',t)$ with $\deg P \leq k$, $P(0,0)=0$, $DP(0,0)=0$ such that $(x_0,t_0)=(0,0)$ in this coordinate system,
	\begin{equation}\label{cka_dom}
		\begin{aligned}
			&Q_\rho \cap \{ (x',x_n,t):x_n > P(x',t) + C(|x'|+\sqrt{|t|})^{k+\alpha} \} \subset Q_r \cap \Omega, \\
			&Q_\rho \cap \{ (x',x_n,t):x_n < P(x',t) - C(|x'|+\sqrt{|t|})^{k+\alpha} \} \subset Q_r \cap \Omega^c.
		\end{aligned}
	\end{equation}
	We define 
	\begin{align*}
		[\partial_p \Omega]_{C^{k,\alpha}(x_0,t_0)} &\coloneqq \inf \{C>0 : \eqref{cka_dom} \text{ holds with } P(x',t) \text{ and } C \}, \\
		\|\partial_p \Omega\|_{C^{k,\alpha}(x_0,t_0)} &\coloneqq [\partial_p \Omega]_{C^{k,\alpha} (x_0,t_0)}  + \sum_{i=2}^k \|D^i P(0',0)\|.
	\end{align*}
\end{definition}

%==========================================================
%		Subsection: Definitions(Viscosity Solutions)
%==========================================================
\subsection{Viscosity solutions and solution classes}
\begin{definition} [Test functions]
	Let $u$ be a continuous function in $Q_1 $. The function $\varphi : Q_1 \to \mathbb{R}$ is called \textit{test function} if it is $C^1$ with respect to $t$ and $C^2$ with respect to $x$.
	\begin{enumerate} [(i)]
		\item We say that the test function $\varphi$ touches $u$ from above at $(x,t)$ if there exists an open neighborhood $U$ of $(x,t)$ such that 
		$$u \leq \varphi  \quad \mbox{in } U \qquad  \mbox{and} \qquad u(x,t) = \varphi(x,t). $$
		\item We say that the test function $\varphi$ touches $u$ from below at $(x,t)$ if there exists an open neighborhood $U$ of $(x,t)$ such that 
		$$u \ge \varphi  \quad \mbox{in } U \qquad  \mbox{and} \qquad u(x,t) = \varphi(x,t). $$
	\end{enumerate}
\end{definition}

\begin{definition}[Viscosity solutions] \label{vis1}
	Let $u$ be a function defined in $Q_1$.
	\begin{enumerate} [(i)]
		\item Let $u$ be a upper semicontinuous function in $Q_1$. $u$ is called a \textit{viscosity subsoution} of \eqref{eq:n_p_lap} in $Q_1 $ when the following condition holds: if for any $(x,t) \in Q_1 $ and any test function $\varphi$ touching $u$ from above at $(x,t)$, then
		\begin{equation*}
			\varphi_t (x,t) - F ( D^2 \varphi (x,t),x,t ) \leq f(x,t).
		\end{equation*}
		\item Let $u$ be a lower semicontinuous function in $Q_1$. $u$ is called a \textit{viscosity supersoution} of \eqref{eq:n_p_lap} in $Q_1 $ when the following condition holds: if for any $(x,t) \in Q_1 $ and any test function $\varphi$ touching $u$ from below at $(x,t)$, then
		\begin{equation*}
			\varphi_t (x,t) - F ( D^2 \varphi (x,t),x,t ) \ge f(x,t).
		\end{equation*}
	\end{enumerate}
\end{definition}

\begin{definition}[Solution classes]
	We define several solutions classes as follows:
	\begin{align*}
		\underline{S}(\lambda, \Lambda,f ) &\coloneqq  \{u \mid u_t - \mathcal{M}^{+}_{\lambda, \Lambda}(D^2u) \leq f \ \text{in the viscosity sense}  \}, \\
		\overline{S}(\lambda, \Lambda,f ) &\coloneqq   \{u \mid u_t - \mathcal{M}^{-}_{\lambda, \Lambda}(D^2u) \ge f  \ \text{in the viscosity sense}   \}, \\
		S(\lambda,\Lambda, f) &\coloneqq \underline{S}(\lambda, \Lambda,f )  \cap \overline{S}(\lambda, \Lambda,f ), \\
		S^*(\lambda,\Lambda, f) &\coloneqq \underline{S}(\lambda, \Lambda, \|f\|_{L^{\infty}})  \cap \overline{S}(\lambda, \Lambda,-\|f\|_{L^{\infty}}).  
	\end{align*}
	In particular, $S(\lambda, \Lambda, f)$ is contained in $S^{\ast}(\lambda, \Lambda, f)$.
\end{definition}

We next introduce the definition of viscosity solutions of parabolic normalized $p$-Laplace equations
\begin{equation} \label{eq:n_p_lap}
	u_t-\Delta_p^Nu=f \quad \text{in $Q_1$},
\end{equation}
where $p \in (1,\infty)$. We observe that
\begin{align*}
	\Delta_p^Nu&=|Du|^{2-p} \, \mathrm{div}(|Du|^{p-2}Du)=(\delta_{ij}+(p-2)|Du|^{-2}u_iu_j)u_{ij} \eqqcolon a_{ij}(Du) u_{ij}
\end{align*}
and
\begin{align*}
	\min\{p-1, 1\} I \leq \delta_{ij}+(p-2)q_iq_j|q|^{-2} \leq \max\{p-1, 1\}I \quad \text{for all $q \in \mathbb{R}^n \setminus \{0\}$.}
\end{align*}
Note that \eqref{eq:n_p_lap} is undeﬁned when $Du = 0$, where it has bounded discontinuous coefficients $a_{ij}(Du)$. This can be resolved by adapting the notion of viscosity solution using the upper and lower semicontinuous envelopes of the normalized $p$-Laplacian.

We denote the maximum and the minimum eigenvalues associated with $M \in \mathcal{S}^n$ as follows:
\begin{equation*}
	e_{\textnormal{min}}(M) \coloneqq \min_{|e|=1} (Me \cdot e) \quad \text{and} \quad e_{\textnormal{max}}(M) \coloneqq \max_{|e|=1} (Me \cdot e).
\end{equation*}

\begin{definition} \label{vis2}
	Let $u$ be a function defined in $Q_1$. 
	\begin{enumerate} [(i)]
		\item Let $u$ be a upper semicontinuous function in $Q_1$. $u$ is called a \textit{viscosity subsoution} of \eqref{eq:n_p_lap} in $Q_1 $ when the following condition holds: if for any $(x,t) \in Q_1 $ and any test function $\varphi$ touching $u$ from above at $(x,t)$, then
		\begin{equation*}
			\left\{\begin{aligned}
				\varphi_t (x,t) - \Delta_p^N \varphi (x,t) &\leq f(x,t) && \text{if } D\varphi(x,t) \ne 0 \\
				\varphi_t (x,t) - \Delta \varphi (x,t) -(p-2)e_{\textnormal{max}}(D^2 \varphi(x,t)) &\leq f(x,t) && \text{if } D\varphi(x,t) \ne 0, \, p \ge 2 \\
				\varphi_t (x,t) - \Delta \varphi (x,t) -(p-2)e_{\textnormal{min}}(D^2 \varphi(x,t)) &\leq f(x,t) && \text{if } D\varphi(x,t) \ne 0, \, 1 < p < 2. 
			\end{aligned}\right.
		\end{equation*}
		\item Let $u$ be a lower semicontinuous function in $Q_1$. $u$ is called a \textit{viscosity supersoution} of \eqref{eq:n_p_lap} in $Q_1 $ when the following condition holds: if for any $(x,t) \in Q_1 $ and any test function $\varphi$ touching $u$ from below at $(x,t)$, then
		\begin{equation*}
			\left\{\begin{aligned}
				\varphi_t (x,t) - \Delta_p^N \varphi (x,t) &\ge f(x,t) && \text{if } D\varphi(x,t) \ne 0 \\
				\varphi_t (x,t) - \Delta \varphi (x,t) -(p-2)e_{\textnormal{min}}(D^2 \varphi(x,t)) &\ge f(x,t) && \text{if } D\varphi(x,t) \ne 0, \, p \ge 2 \\
				\varphi_t (x,t) - \Delta \varphi (x,t) -(p-2)e_{\textnormal{max}}(D^2 \varphi(x,t)) &\ge f(x,t) && \text{if } D\varphi(x,t) \ne 0, \, 1 < p < 2. 
			\end{aligned}\right.
		\end{equation*}
	\end{enumerate}
\end{definition}

%==========================================================
%		Subsection: Definitions(Solutions Classes)
%==========================================================

\section{Regularity for functions in solution classes}\label{sec:solutionclass}
\subsection{Interior $C^{1,\alpha}$-regularity}
We begin with the following approximation lemma.
\begin{lemma}\label{lem:approx_c1a}
	For any $\alpha \in (0,1)$, there exists $\delta>0$ depending only on $n$ and $\alpha$ such that if $u$ satisfies \eqref{eq:s*_class} with 
	\begin{equation*}
	\|u\|_{L^{\infty}(Q_1)} \leq 1, \quad \|f\|_{L^{\infty}(Q_1)} \leq \delta, \quad \text{and} \quad \Lambda \leq (1+\delta)\lambda, 
	\end{equation*}
	then there exists a linear function $L$ such that
	\begin{align*}
		\|u-L\|_{L^{\infty}(Q_{\eta})} \leq \eta^{1+\alpha}
	\end{align*}
and
\begin{align*}
	|L(0,0)|+|DL| \leq C_1,
\end{align*}
where $C_1>1$ depends only on $n$ and $\lambda$, and $\eta \in(0,1)$ depends only on $n$, $\lambda$, and $\alpha$.
\end{lemma}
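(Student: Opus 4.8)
The plan is to argue by compactness and contradiction, exploiting the fact that when $\Lambda/\lambda \to 1$ and $\|f\|_{L^{\infty}} \to 0$ the two inequalities in \eqref{eq:s*_class} collapse onto the heat equation $u_t - \lambda \Delta u = 0$, whose solutions are smooth and therefore well approximated by their spatial first-order Taylor polynomials. The gain in the H\"older exponent is then extracted from the quadratic decay of this caloric approximation.

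First I would record the model estimate. If $h$ is a viscosity (hence classical) solution of $h_t = \lambda \Delta h$ in $Q_1$ with $\|h\|_{L^{\infty}(Q_1)} \le 1$, interior estimates for the heat equation bound $h(0,0)$, $Dh(0,0)$, $h_t(0,0)$, and $D^2 h(0,0)$ by constants depending only on $n$ and $\lambda$. Setting $L(x) := h(0,0) + Dh(0,0)\cdot x$ and using the parabolic Taylor expansion together with the fact that on $Q_\rho$ the time contribution $h_t(0,0)\,t$ is itself $O(\rho^2)$, I obtain, for all $\rho \in (0,1)$,
\[
\|h - L\|_{L^{\infty}(Q_\rho)} \le C_0 \rho^2, \qquad |L(0,0)| + |DL| \le C_1,
\]
with $C_0, C_1$ depending only on $n$ and $\lambda$ and $C_1 > 1$. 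Since $\alpha < 1$, I then fix $\eta = \eta(n,\lambda,\alpha) \in (0,\tfrac12]$ so small that $C_0 \eta^2 \le \tfrac12 \eta^{1+\alpha}$, which is possible because $\eta^2 = \eta^{1-\alpha}\,\eta^{1+\alpha}$ and $1-\alpha>0$; this is exactly the mechanism that upgrades the quadratic decay to the desired $\eta^{1+\alpha}$.

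Next comes the compactness step. Suppose the lemma fails for these $\eta$ and $C_1$. Then there exist $\delta_k \downarrow 0$, ellipticity constants with $\Lambda_k \le (1+\delta_k)\lambda$, right-hand sides $\|f_k\|_{L^{\infty}(Q_1)} \le \delta_k$, and functions $u_k$ satisfying \eqref{eq:s*_class} with $\|u_k\|_{L^{\infty}(Q_1)}\le 1$, such that $\|u_k - L\|_{L^{\infty}(Q_\eta)} > \eta^{1+\alpha}$ for every affine $L$ with $|L(0,0)|+|DL|\le C_1$. By the interior H\"older estimate (item (ii) of the introduction) the $u_k$ are equi-H\"older on compact subsets of $Q_1$, so along a subsequence $u_k \to u_\infty$ locally uniformly with $\|u_\infty\|_{L^{\infty}(Q_1)}\le 1$. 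Because $\mathcal{M}^{\pm}_{\lambda,\Lambda_k}(M) \to \lambda\,\mathrm{tr}(M)$ uniformly on compact subsets of $\mathcal{S}^n$ as $\Lambda_k \to \lambda$, and $\|f_k\|_{L^{\infty}} \to 0$, the standard stability theorem for viscosity sub/supersolutions lets me pass to the limit in both inequalities of \eqref{eq:s*_class}; hence $u_{\infty,t} - \lambda\Delta u_\infty \le 0$ and $u_{\infty,t} - \lambda\Delta u_\infty \ge 0$ in the viscosity sense, i.e. $u_\infty$ is caloric.

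Finally I close the argument: applying the model estimate to $h = u_\infty$ yields an affine $L_\infty$ with $|L_\infty(0,0)| + |DL_\infty| \le C_1$ and $\|u_\infty - L_\infty\|_{L^{\infty}(Q_\eta)} \le C_0\eta^2 \le \tfrac12 \eta^{1+\alpha}$; since $u_k \to u_\infty$ uniformly on $\overline{Q_\eta}$, for $k$ large $\|u_k - u_\infty\|_{L^{\infty}(Q_\eta)} \le \tfrac12\eta^{1+\alpha}$, so $\|u_k - L_\infty\|_{L^{\infty}(Q_\eta)} \le \eta^{1+\alpha}$, contradicting the choice of $u_k$. I expect the main obstacle to be the passage to the limit: one must verify the uniform convergence $\mathcal{M}^{\pm}_{\lambda,\Lambda_k}\to\lambda\,\mathrm{tr}$ and invoke the viscosity stability theorem correctly for the parabolic sub/supersolution inequalities, as this is precisely what forces the limit to be caloric and unlocks its smoothness. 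The dependence of $\delta$ on $n$ and $\alpha$ alone follows after normalizing $\lambda=1$ through the homogeneity $\mathcal{M}^{\pm}_{\lambda,\Lambda} = \lambda\,\mathcal{M}^{\pm}_{1,\Lambda/\lambda}$ and a rescaling of time, so that the compactness is genuinely in the ratio $\Lambda/\lambda \to 1$.
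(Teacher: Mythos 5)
Your proposal is correct and follows essentially the same route as the paper: a compactness/contradiction argument in which the limit of the contradicting sequence is caloric (via uniform convergence $\mathcal{M}^{\pm}_{\lambda,\Lambda_k}\to\lambda\,\mathrm{tr}$ and viscosity stability), and the quadratic decay of the caloric function's first-order Taylor expansion is traded for $\eta^{1+\alpha}$ by choosing $\eta$ small since $\alpha<1$. The only differences are presentational—you fix $\eta$ and $C_1$ up front from the model estimate rather than ``to be determined later,'' and you make explicit the normalization $\mathcal{M}^{\pm}_{\lambda,\Lambda}=\lambda\,\mathcal{M}^{\pm}_{1,\Lambda/\lambda}$ justifying that $\delta$ depends only on $n$ and $\alpha$, a point the paper leaves implicit.
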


\begin{proof}
	We prove by contradiction; we suppose the conclusion does not hold. In other words, there exist sequences $u_k \in C(Q_1)$, $f_k \in C(Q_1) \cap L^{\infty}(Q_1)$,  and $\Lambda_k \geq \lambda$ such that $u_k$ satisfies
\begin{equation*}
	\left\{\begin{aligned}
		\partial_t u_k - \mathcal{M}^{-}_{\lambda, \Lambda_k}(D^2u_k) &\ge -\|f_k\|_{L^{\infty}(Q_1)} \\
		\partial_t u_k - \mathcal{M}^{+}_{\lambda, \Lambda_k}(D^2u_k) & \leq \|f_k\|_{L^{\infty}(Q_1)} 
	\end{aligned}\right.
	\quad \text{in $Q_1$},
\end{equation*}
	\begin{align*}
		\|u_k\|_{L^{\infty}(Q_1)} \leq 1, \quad \|f_k\|_{L^{\infty}(Q_1)} \leq 1/k, \quad \text{and} \quad \Lambda_k \leq (1+ 1/k)\lambda.
	\end{align*}
Moreover, for any linear function $L$ satisfying $|L(0,0)|+|DL| \leq C_1$, we have
\begin{align}\label{eq:contra_int}
	\|u_k-L\|_{L^{\infty}(Q_{\eta})} >\eta^{1+\alpha},
\end{align}
where $C_1>0$ and $\eta \in (0,1)$ will be determined later.

We first note that $\{u_k\}_{k=1}^{\infty}$ is uniformly bounded in $L^{\infty}(Q_1)$ and by the interior H\"older estimate (\cite[Theorem 4.19]{Wan92a}), $\{u_k\}_{k=1}^{\infty}$ is equicontinuous in compact sets of $Q_1$. Thus, by Arzela-Ascoli theorem, there exist a subsequence $\{u_{k_j}\}_{j=1}^{\infty}$ of $\{u_k\}_{k=1}^{\infty}$ and a limit function $\overline{u} \in C(Q_1)$ such that $u_{k_j} \to \overline{u}$ uniformly in compact sets of $Q_1$ as $j \to \infty$. Moreover, since 
\begin{equation*} 
	\mathcal{M}_{\lambda, \Lambda_{k_j}}^{\pm}(M) \to \lambda \, \mathrm{tr}(M) \text{ uniformly in compact sets of }\mathcal{S}^n \text{ as } j \to \infty
\end{equation*}
and  $\|f_{k_j}\|_{L^{\infty}(Q_1)} \to 0$ as $j \to \infty$, the stability theorem yields that 
\begin{align*}
	\overline{u}_t - \lambda \Delta \overline{u} =0 \quad \text{in $Q_1$}.
\end{align*} 
We now choose a linear function $\overline{L}$ by 
\begin{align*}
	\overline{L}(x) \coloneqq \overline{u}(0,0)+D\overline{u}(0,0) \cdot x .
\end{align*}
Then, by the interior estimates for the uniformly parabolic equations, there exists $C>0$ that depends only on $n$ and $\lambda$ such that
\begin{align*}
	|\overline{u}(x,t)-\overline{L}(x)| \leq C(|x|+\sqrt{|t|})^2 \quad \text{for any $(x,t) \in Q_{1/2}$}
\end{align*}
and $|\overline{u}(0,0)|+|D\overline{u}(0,0)| \leq C$.
We take $C_1 \coloneqq 1+C$ and $\eta$ small enough so that $8C\eta^{1-\alpha} <1/2$. Then we observe that
\begin{align*}
	\|\overline{u}-\overline{L}\|_{L^{\infty}(Q_{\eta})} \leq 8C \eta^2 <\eta^{1+\alpha}/2.
\end{align*}
On the other hand, by letting $k_j \to \infty$ in \eqref{eq:contra_int}, we have
\begin{align*}
	\|\overline{u}-\overline{L}\|_{L^{\infty}(Q_{\eta})} \ge\eta^{1+\alpha},
\end{align*}
which leads to the contradiction.
\end{proof}

We would like to iterate the result in \Cref{lem:approx_c1a} to find a sequence of linear functions $\{L_k\}_{k=0}^{\infty}$ close to $u$ in a discrete way.

\begin{theorem} \label{thm:u-Lk}
	Let $\delta>0$ be as in \Cref{lem:approx_c1a}, and $u$ satisfy \eqref{eq:s*_class}. Suppose that
	\begin{equation*}
		\|u\|_{L^{\infty}(Q_1)} \leq 1, \quad \|f\|_{L^{\infty}(Q_1)} \leq \delta, \quad \text{and} \quad\Lambda \leq (1+ \delta)\lambda.
	\end{equation*}
Then there exists a sequence of linear functions
\begin{align*}
	L_k(x)=a_k+b_k\cdot x, \quad k \geq -1
\end{align*}
such that for all $k \geq 0$, we have 
\begin{align}\label{eq:lin1}
	\|u-L_k\|_{L^{\infty}(Q_{\eta^k})} \leq \eta^{k(1+\alpha)},\
\end{align}
and
\begin{align}\label{eq:lin2}
	|a_k-a_{k-1}|+\eta^{k} |b_k-b_{k-1}| \leq 2C_1 \eta^{(k-1)(1+\alpha)},
\end{align}
where $C_1>1$ and $\eta \in (0,1)$ are determined in \Cref{lem:approx_c1a}.
\end{theorem}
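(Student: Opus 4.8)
The plan is to argue by induction on $k$, applying \Cref{lem:approx_c1a} at each step to a suitably normalized rescaling of the remainder $u - L_k$. For the base case I would set $L_{-1} \equiv L_0 \equiv 0$; then \eqref{eq:lin1} at $k=0$ is exactly the hypothesis $\|u\|_{L^\infty(Q_1)} \le 1 = \eta^0$, and \eqref{eq:lin2} at $k=0$ holds trivially since its left-hand side vanishes.

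For the inductive step, assume $L_0, \dots, L_k$ have been constructed so that \eqref{eq:lin1} and \eqref{eq:lin2} hold up to index $k$. The key construction is to rescale the normalized remainder by setting
\[
v(x,t) \coloneqq \eta^{-k(1+\alpha)}\,(u - L_k)\bigl(\eta^k x, \eta^{2k} t\bigr), \qquad (x,t) \in Q_1 .
\]
The inductive bound \eqref{eq:lin1} gives $\|v\|_{L^\infty(Q_1)} \le 1$, and the crucial point is that $v$ again lies in the solution class with controlled forcing. Since $L_k(x) = a_k + b_k\cdot x$ is affine in $x$ and independent of $t$, we have $(L_k)_t = 0$ and $D^2 L_k = 0$, so $u - L_k$ satisfies the same inequalities \eqref{eq:s*_class} as $u$; this is precisely the invariance of the solution class under subtraction of affine functions. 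The parabolic dilation $(x,t)\mapsto(\eta^k x, \eta^{2k}t)$ multiplies both $u_t$ and $D^2 u$ by $\eta^{2k}$, and combining this with the normalizing factor $\eta^{-k(1+\alpha)}$ and the degree-one homogeneity of $\mathcal M^{\pm}_{\lambda,\Lambda}$, I obtain that $v$ satisfies \eqref{eq:s*_class} with the same $\lambda,\Lambda$ and forcing $\tilde f(x,t)=\eta^{k(1-\alpha)} f(\eta^k x, \eta^{2k}t)$. Because $0<\alpha<1$ and $k\ge 0$, we have $\eta^{k(1-\alpha)}\le 1$, hence $\|\tilde f\|_{L^\infty(Q_1)}\le \|f\|_{L^\infty(Q_1)}\le \delta$, and $v$ meets all the hypotheses of \Cref{lem:approx_c1a}.

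Applying \Cref{lem:approx_c1a} to $v$ yields a linear function $\tilde L(x)=\tilde a+\tilde b\cdot x$ with $\|v-\tilde L\|_{L^\infty(Q_\eta)}\le \eta^{1+\alpha}$ and $|\tilde a|+|\tilde b|\le C_1$. Undoing the rescaling, I would define
\[
L_{k+1}(x) \coloneqq L_k(x) + \eta^{k(1+\alpha)}\tilde a + \eta^{k\alpha}\,\tilde b\cdot x ,
\]
so that $a_{k+1}=a_k+\eta^{k(1+\alpha)}\tilde a$ and $b_{k+1}=b_k+\eta^{k\alpha}\tilde b$. A direct change of variables converts $\|v-\tilde L\|_{L^\infty(Q_\eta)}\le \eta^{1+\alpha}$ into \eqref{eq:lin1} at index $k+1$, namely $\|u-L_{k+1}\|_{L^\infty(Q_{\eta^{k+1}})}\le \eta^{(k+1)(1+\alpha)}$, while the defining increments give
\[
|a_{k+1}-a_k| + \eta^{k+1}|b_{k+1}-b_k| = \eta^{k(1+\alpha)}\bigl(|\tilde a|+\eta\,|\tilde b|\bigr) \le \eta^{k(1+\alpha)} C_1 \le 2C_1\eta^{k(1+\alpha)},
\]
which is exactly \eqref{eq:lin2} at index $k+1$. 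This closes the induction.

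The step I expect to be the main obstacle is the bookkeeping in the rescaling: one must track carefully how the normalizing power $\eta^{-k(1+\alpha)}$, the parabolic dilation exponents, and the degree-one homogeneity of the Pucci operators combine, and in particular confirm that the transformed forcing $\tilde f$ stays below $\delta$, which hinges on the inequality $\eta^{k(1-\alpha)}\le 1$. The remaining estimates are routine, and the factor $2$ in \eqref{eq:lin2} leaves ample slack since $\eta<1$ and $|\tilde a|+|\tilde b|\le C_1$.
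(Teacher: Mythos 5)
Your proposal is correct and follows essentially the same route as the paper: the same induction with base $L_{-1}=L_0=0$, the same parabolic rescaling $v=\eta^{-k(1+\alpha)}(u-L_k)(\eta^k x,\eta^{2k}t)$ (the paper writes it as $v(y,s)=(u(x,t)-L_k(x))/r^{1+\alpha}$ with $r=\eta^k$, $y=x/r$, $s=t/r^2$), the same verification that $v\in S^{\ast}(\lambda,\Lambda,\widetilde f)$ with $\widetilde f=\eta^{k(1-\alpha)}f(\eta^k\cdot,\eta^{2k}\cdot)$ and $\|\widetilde f\|_{L^\infty(Q_1)}\leq\delta$, and the same update $L_{k+1}(x)=L_k(x)+\eta^{k(1+\alpha)}\widetilde a+\eta^{k\alpha}\,\widetilde b\cdot x$ after applying \Cref{lem:approx_c1a}. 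Your increment estimate even carries a spare factor of $2$ relative to \eqref{eq:lin2}, just as in the paper's computation, so there is nothing to correct.
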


\begin{proof}
	We argue by induction. For $k=0$, by setting $L_{-1}=L_0=0$, the conditions immediately hold. Suppose that the conclusion holds for $k \geq 0$. We claim that the conclusion also holds for $k+1$.
	
	For this purpose, let $r=\eta^k$, $y=x/r$, $s=t/r^2$ and 
	\begin{align*}
		v(y,s) \coloneqq \frac{u(x,t)-L_k(x)}{r^{1+\alpha}}.
	\end{align*}
Then we observe that for $(y,s) \in Q_1$,
\begin{align*}
	\partial_s v(y,s) - \mathcal{M}_{\lambda, \Lambda}^+(D_y^2v(y,s)) &=r^{1-\alpha}\left( u_t(x,t) -\mathcal{M}_{\lambda, \Lambda}^+(D^2u(x,t))\right)\\
	& \leq  r^{1-\alpha} \|f\|_{L^{\infty}(Q_r)} .
\end{align*}
A similar calculation for $\mathcal{M}^-_{\lambda, \Lambda}(D^2v)$ shows that $v \in S^{\ast}(\lambda, \Lambda, \widetilde{f})$ in $Q_1$ with 
\begin{equation*}
	\|v\|_{L^{\infty}(Q_1)} \leq 1, \quad \|\widetilde{f}\|_{L^{\infty}(Q_1)} \leq \delta, \quad \text{and} \quad \Lambda\leq(1+\delta)\lambda,
\end{equation*}
where $\widetilde{f}(y,s) \coloneqq r^{1-\alpha} f(x,t)$. Thus, by applying \Cref{lem:approx_c1a} to $v$, there exists a linear function $\widetilde{L}$ such that 
\begin{align}\label{eq:induc1}
	\|v-\widetilde{L}\|_{L^{\infty}(Q_{\eta})} \leq \eta^{1+\alpha}
\end{align}
and
\begin{align}\label{eq:induc2}
	|\widetilde{L}(0,0)|+|D\widetilde{L}| \leq C_1.
\end{align}
We now let $L_{k+1}(x) \coloneqq L_k(x)+r^{1+\alpha} \widetilde{L}(x/r)$. Then \eqref{eq:induc2} implies that
\begin{align*}
	|a_{k+1}-a_k|+\eta^{k+1}|b_{k+1}-b_k| \leq 2C_1 \eta^{k(1+\alpha)}.
\end{align*}
Finally, \eqref{eq:induc1} shows that  
\begin{align*}
	|u(x,t)-L_{k+1}(x)| &=|u(x,t)-L_k(x)-r^{1+\alpha}\widetilde{L}(x/r) | \\
	&\leq r^{1+\alpha} |v(x/r,t/r^2)-\widetilde{L}(x/r)| \\
	&\leq \eta^{(k+1)(1+\alpha)} \quad \text{for all } (x,t) \in Q_{\eta^{k+1}},
\end{align*}
as desired.
\end{proof}

We are now ready to prove our first main theorem, \Cref{thm:main1}.

\begin{proof}[Proof of \Cref{thm:main1}]
	Let $u$ satisfy \eqref{eq:s*_class} with $f \in C(Q_1) \cap L^{\infty}(Q_1)$ and $\Lambda \leq (1+\delta)\lambda$ for $\delta>0$ which was chosen in \Cref{thm:u-Lk}. We claim that the assumptions in \Cref{thm:u-Lk} hold after the appropriate reduction argument. For $K\coloneqq \|u\|_{L^{\infty}(Q_1)}+\|f\|_{L^{\infty}(Q_1)}/\delta+1$, by considering  
	\begin{align*}
		\widetilde{u}(x,t) \coloneqq u(x,t)/K \quad \text{and} \quad \widetilde{f}(x,t) \coloneqq f(x,t)/K,
	\end{align*}
we may assume that 
\begin{equation*}
\|u\|_{L^{\infty}(Q_1)} \leq 1 \quad \text{and} \quad \|f\|_{L^{\infty}(Q_1)} \leq \delta.
\end{equation*}

Thus, we can apply \Cref{thm:u-Lk} for $u$ to find a sequence of linear function $L_k(x)=a_k+b_k \cdot x$ satisfying \eqref{eq:lin1} and \eqref{eq:lin2}. Indeed, by standard argument, we obtain limits $a$ and $b$ such that $a_k \to a$ and $b_k \to b$ satisfying
\begin{align*}
	|a_k-a| \leq C\eta^{k(1+\alpha)} \quad \text{and} \quad |b_k-b| \leq C\eta^{k\alpha}.
\end{align*}
Moreover, if we write $L(x)=a+b\cdot x$, then we have $L_k \to L$ and 
\begin{align*}
	\|L_k-L\|_{L^{\infty}(Q_{\eta^k})} \leq C\eta^{k(1+\alpha)}.
\end{align*}
Finally, for any $(x,t) \in Q_1$, there exists $j \geq 0$ such that $\eta^{j+1} \leq \max\{ |x|, \sqrt{|t|} \}< \eta^j$. Then we conclude that
\begin{align*}
	|u(x,t)-L(x)| \leq |u(x,t)-L_j(x)|+|L_j(x)-L(x)| \leq C(|x|+\sqrt{|t|})^{1+\alpha},
\end{align*}
which implies that $u$ is $C^{1, \alpha}$ at $(0,0)$.
\end{proof}

\subsection{Pointwise boundary $C^{1,\alpha}$-regularity}
In this section, we establish the pointwise boundary $C^{1,\alpha}$-estimate for $ u \in S^*(\lambda,\Lambda,f)$ in the more generalized situation; namely, we replace the unit cube $Q_1$ by a domain $\Omega$ which is $C^{1, \alpha}$ at one point. For this purpose, we assume that $(0,0) \in \partial_p \Omega$ and $\partial_p \Omega  \in C^{1,\alpha}(0,0)$, and define
\begin{equation*}
	\osc_{Q_r} \partial_p\Omega \coloneqq \sup_{(x,t) \in \partial_p\Omega\cap Q_r} x_n -\inf_{(x,t) \in \partial_p\Omega\cap Q_r} x_n \quad \text{for } r >0.
\end{equation*}

The following lemma corresponds to the approximation lemma at boundary points.
\begin{lemma}\label{lem:approx_c1a_bd}
	For any $\alpha \in (0,1)$, there exists $\delta>0$ depending only on $n$ and $\alpha$ such that if $u$ satisfies
	\begin{equation} \label{eq:dirichlet}
		\left\{\begin{aligned}
			u & \in S^*(\lambda,\Lambda, f) && \text{in } \Omega \cap Q_1 \\
			u &= g && \text{on } \partial_p \Omega \cap Q_1.
		\end{aligned}\right.
	\end{equation}
	with $\|u\|_{L^{\infty}(\Omega \cap Q_1)} \leq 1$, $\|f\|_{L^{\infty}(\Omega \cap Q_1)} \leq \delta$, $\|g\|_{L^{\infty}(\partial_p \Omega \cap Q_1)} \leq \delta$, $\osc_{Q_1} \partial_p \Omega \leq \delta$, and $\Lambda \leq (1+\delta)\lambda$, then there exists a constant $a\in\mathbb{R}$ such that
	\begin{align*}
		\|u-ax_n\|_{L^{\infty}(\Omega\cap Q_{\eta})} \leq \eta^{1+\alpha}
	\end{align*}
and
\begin{align*}
	|a| \leq C_1,
\end{align*}
where $C_1>1$ depends only on $n$ and $\lambda$, and $\eta \in(0,1)$ depends only on $n$, $\lambda$, and $\alpha$.
\end{lemma}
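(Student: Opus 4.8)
The plan is to run the contradiction–compactness scheme of Lemma~\ref{lem:approx_c1a}, now carried out up to a flattening boundary. Suppose the statement fails for the given $\alpha$. Then for each $k$ there exist a domain $\Omega_k$ with $(0,0)\in\partial_p\Omega_k$, a constant $\Lambda_k\in[\lambda,(1+1/k)\lambda]$, data $f_k$ and $g_k$, and a function $u_k$ solving \eqref{eq:dirichlet} on $\Omega_k$ with
\begin{equation*}
\|u_k\|_{L^{\infty}(\Omega_k\cap Q_1)}\le 1,\quad \|f_k\|_{L^{\infty}}\le 1/k,\quad \|g_k\|_{L^{\infty}}\le 1/k,\quad \osc_{Q_1}\partial_p\Omega_k\le 1/k,
\end{equation*}
yet such that $\|u_k-ax_n\|_{L^{\infty}(\Omega_k\cap Q_{\eta})}>\eta^{1+\alpha}$ for every $a$ with $|a|\le C_1$, where $C_1>1$ and $\eta\in(0,1)$ are to be fixed below.

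Since $\osc_{Q_1}\partial_p\Omega_k\le 1/k$ and $(0,0)\in\partial_p\Omega_k$, the boundary is trapped in the slab $\{|x_n|\le 1/k\}$, so $\Omega_k\cap Q_1$ exhausts the half-cylinder $Q_1^+=Q_1\cap\{x_n>0\}$ as $k\to\infty$. I would combine the interior H\"older estimate with a boundary H\"older estimate for solution classes on nearly-flat domains to obtain uniform equicontinuity of $\{u_k\}$ on compact subsets of $\overline{Q_1^+}$ that meet the flat face. By Arzel\`a--Ascoli and a diagonal argument, a subsequence converges locally uniformly up to the flat face to some $\overline{u}\in C(\overline{Q_1^+})$. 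Because $\mathcal{M}^{\pm}_{\lambda,\Lambda_k}(M)\to\lambda\,\mathrm{tr}(M)$ uniformly on compact sets and $\|f_k\|_{L^{\infty}}\to 0$, the stability theorem gives $\overline{u}_t-\lambda\Delta\overline{u}=0$ in $Q_1^+$; and $\|g_k\|_{L^{\infty}}\to 0$ together with the boundary equicontinuity forces $\overline{u}=0$ on $\{x_n=0\}\cap Q_1$.

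Now $\overline{u}$ solves the constant-coefficient equation $\partial_t-\lambda\Delta$ in $Q_1^+$ with vanishing data on the flat face, hence is smooth in $Q_1^+$ and continuous up to $\{x_n=0\}$. The odd reflection $\widetilde{u}(x',x_n,t):=-\overline{u}(x',-x_n,t)$ for $x_n<0$ extends $\overline{u}$ to a solution of $\partial_t-\lambda\Delta$ in all of $Q_1$, hence smooth in the interior. Since $\widetilde{u}$ is odd in $x_n$, all tangential and time derivatives vanish at the origin, so its first-order Taylor polynomial at $(0,0)$ is exactly $a\,x_n$ with $a:=\partial_{x_n}\overline{u}(0,0)$; interior gradient and second-order estimates for $\partial_t-\lambda\Delta$ give $|a|\le C$ and
\begin{equation*}
|\overline{u}(x,t)-a\,x_n|\le C(|x|+\sqrt{|t|})^2\quad\text{for }(x,t)\in Q_{1/2}^+,
\end{equation*}
with $C=C(n,\lambda)$. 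Setting $C_1:=1+C$ and choosing $\eta\in(0,1)$ (depending only on $n,\lambda,\alpha$) small enough that $8C\eta^{1-\alpha}<1/2$, I obtain $\|\overline{u}-a\,x_n\|_{L^{\infty}(Q_{\eta}^+)}\le\eta^{1+\alpha}/2$. On the other hand, since $|a|\le C<C_1$, passing to the limit $k\to\infty$ in the contradiction hypothesis (using $\Omega_k\cap Q_{\eta}\to Q_{\eta}^+$ and $u_k\to\overline{u}$ uniformly there) yields $\|\overline{u}-a\,x_n\|_{L^{\infty}(Q_{\eta}^+)}\ge\eta^{1+\alpha}$, a contradiction.

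The main obstacle is the second paragraph: one needs a uniform boundary H\"older (equicontinuity) estimate for functions in the solution class on domains whose boundary is only nearly flat, both to extract a limit that is continuous up to the flat face and to transfer the vanishing of $g_k$ into the limiting condition $\overline{u}=0$ on $\{x_n=0\}$. Extra care is required because the domains $\Omega_k$ vary with $k$ and the data $g_k$ is prescribed only on the thin set $\partial_p\Omega_k\subset\{|x_n|\le 1/k\}$, so this transfer should be justified by barriers or the model-problem boundary estimate rather than by direct restriction of the limit.
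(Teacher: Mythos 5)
Your proposal is correct and follows essentially the same route as the paper: a contradiction--compactness argument extracting a limit $\overline{u}$ solving $\overline{u}_t-\lambda\Delta\overline{u}=0$ in $Q_1^+$ with $\overline{u}=0$ on the flat face, odd reflection across $\{x_n=0\}$, interior estimates for the heat equation to produce $a=D_n\overline{u}(0,0)$ with the quadratic error bound, and the same choices $C_1=1+C$ and $8C\eta^{1-\alpha}<1/2$ to reach the contradiction. In fact you are more explicit than the paper on the one delicate point --- the uniform boundary equicontinuity (via barriers or a boundary H\"older estimate on nearly-flat, $k$-dependent domains) needed to get convergence up to the face and to transfer $\|g_k\|_{L^\infty}\to 0$ into $\overline{u}=0$ on $\{x_n=0\}$ --- which the paper compresses into ``as in the proof of Lemma~\ref{lem:approx_c1a}.''
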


\begin{proof}
	We prove by contradiction; we suppose the conclusion does not hold. In other words, there exist sequences $u_k$, $f_k$, $g_k$, $\Omega_k$, and $\Lambda_k \geq \lambda$ such that $u_k$ satisfies
	\begin{equation*}
		\left\{\begin{aligned}
			u_k & \in S^*(\lambda,\Lambda_k, f_k) && \text{in } \Omega_k \cap Q_1 \\
			u_k &= g_k && \text{on } \partial_p \Omega_k \cap Q_1,
		\end{aligned}\right.
	\end{equation*}
$\|u_k\|_{L^{\infty}(\Omega_k \cap Q_1)} \leq 1$, $\|f_k\|_{L^{\infty}(\Omega_k \cap Q_1)} \leq 1/k$, $\|g_k\|_{L^{\infty}(\partial_p \Omega_k \cap Q_1)} \leq 1/k$, $\osc_{Q_1} \partial_p \Omega_k \leq 1/k$, and $\Lambda_k \leq (1+ 1/k)\lambda$. Moreover, for any constant $a \in \mathbb{R}$ satisfying $|a| \leq C_1$, we have
\begin{align}\label{eq:contradiction}
	\|u_k-ax_n\|_{L^{\infty}(\Omega \cap Q_{\eta})} >\eta^{1+\alpha},
\end{align}
where $C_1>0$ and $\eta \in (0,1)$ will be determined later.

As in the proof of \Cref{lem:approx_c1a}, there exist a subsequence $\{u_{k_j}\}_{j=1}^{\infty}$ of $\{u_k\}_{k=1}^{\infty}$ and a limit function $\overline{u} \in C(Q_1^+)$ such that $u_{k_j} \to \overline{u}$ uniformly in compact sets of $Q_1^+$ as $j \to \infty$. Moreover, $\overline{u}$ satisfies
\begin{equation*}
	\left\{\begin{aligned}
		\overline{u}_t - \lambda \Delta \overline{u} &=0 && \text{in $Q_1^+$} \\
		\overline{u} &=0 && \text{on $\{x_n=0\} \cap Q_1$}.
	\end{aligned}\right.
\end{equation*} 
By the reflection principle for the heat equation, the function 
\begin{equation*}
	\widetilde{u}(x,t) \coloneqq 
	\begin{cases}
		\overline{u}(x,t) & \text{if } x_n \ge 0 \\
		-\overline{u}(x',-x_n,t) & \text{if } x_n < 0,
	\end{cases}
\end{equation*}
satisfies $\widetilde{u}_t - \lambda \Delta \widetilde{u} =0$ in $Q_1$. Then, by the interior estimates for heat equations, there exists $C>0$ that depends only on $n$ and $\lambda$ such that
\begin{align*}
	|\widetilde{u}(x,t)- D_n\widetilde{u}(0,0) x_n | \leq C(|x|+\sqrt{|t|})^2 \quad \text{for any $(x,t) \in Q_{1/2}$}
\end{align*}
and $|D_n\overline{u}(0,0)|=|D_n\widetilde{u}(0,0)| \leq C$. We take $C_1 \coloneqq 1+C$ and $\eta$ small enough so that $8C\eta^{1-\alpha} <1/2$. Then we observe that
\begin{align*}
	\|\overline{u}-D_n\overline{u}(0,0) x_n\|_{L^{\infty}(Q_{\eta}^+)} \leq 8C \eta^2 <\eta^{1+\alpha}/2.
\end{align*}
On the other hand, by letting $k_j \to \infty$ in \eqref{eq:contradiction}, we have
\begin{align*}
	\|\overline{u}-D_n\overline{u}(0,0) x_n\|_{L^{\infty}(Q_{\eta}^+)} \ge\eta^{1+\alpha},
\end{align*}
which leads to the contradiction.
\end{proof}

\begin{theorem} \label{thm:u-Lk_bd}
	Let $\delta>0$, $\eta \in (0,1)$, and $C_1>1$ be as in \Cref{lem:approx_c1a_bd}, and $u$ satisfy \eqref{eq:dirichlet}. Suppose that
	\begin{equation*}
		\|u\|_{L^{\infty}(\Omega \cap Q_1)} \leq 1, \quad \|f\|_{L^{\infty}(\Omega \cap Q_1)} \leq \delta, \quad \| \partial_p \Omega \|_{C^{1,\alpha}(0,0)} \leq \frac{1-\eta^\alpha}{2C_1}\delta, \quad\Lambda \leq (1+ \delta)\lambda,
	\end{equation*}
	and
	\begin{equation*}  
		|g(x,t)| \leq \frac{\delta}{2^{2+\alpha}} (|x|+\sqrt{|t|})^{1+\alpha} \quad \text{for any $(x,t) \in \partial_p\Omega\cap Q_1$},
	\end{equation*}
Then there exists a sequence $\{a_k\}_{k=-1}^{\infty}$
such that for all $k \geq 0$, we have 
\begin{align}\label{eq:lin1_bd}
	\|u-a_kx_n\|_{L^{\infty}(\Omega \cap Q_{\eta^k})} \leq \eta^{k(1+\alpha)},\
\end{align}
and
\begin{align}\label{eq:lin2_bd}
	|a_k-a_{k-1}| \leq  C_1 \eta^{(k-1) \alpha}.
\end{align}
\end{theorem}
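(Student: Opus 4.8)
This is the boundary analogue of \Cref{thm:u-Lk}, and the plan is to argue by induction on $k$ exactly as in the interior case, using \Cref{lem:approx_c1a_bd} as the engine at each step. For $k=0$, setting $a_{-1}=a_0=0$ makes \eqref{eq:lin1_bd} reduce to $\|u\|_{L^\infty(\Omega\cap Q_1)}\le 1$, which is assumed, and \eqref{eq:lin2_bd} is trivial. Assuming the conclusion through level $k$, I would rescale: with $r=\eta^k$, $y=x/r$, $s=t/r^2$, define
\begin{equation*}
	v(y,s)\coloneqq\frac{u(x,t)-a_kx_n}{r^{1+\alpha}},
\end{equation*}
and set $\widetilde\Omega\coloneqq r^{-1}\Omega$, $\widetilde{g}(y,s)\coloneqq r^{-(1+\alpha)}(g(x,t)-a_kx_n)$. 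The same computation as in \Cref{thm:u-Lk} shows $v\in S^*(\lambda,\Lambda,\widetilde f)$ in $\widetilde\Omega\cap Q_1$ with $\widetilde f(y,s)=r^{1-\alpha}f(x,t)$, so $\|\widetilde f\|_{L^\infty}\le\delta$, and \eqref{eq:lin1_bd} at level $k$ gives $\|v\|_{L^\infty(\widetilde\Omega\cap Q_1)}\le 1$. Then \Cref{lem:approx_c1a_bd} produces a constant $\tilde a$ with $\|v-\tilde a\,y_n\|_{L^\infty(\widetilde\Omega\cap Q_\eta)}\le\eta^{1+\alpha}$ and $|\tilde a|\le C_1$. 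Setting $a_{k+1}\coloneqq a_k+r^\alpha\tilde a$ yields $|a_{k+1}-a_k|=r^\alpha|\tilde a|\le C_1\eta^{k\alpha}$, which is \eqref{eq:lin2_bd} at level $k+1$; and undoing the scaling in the oscillation bound for $v$ gives \eqref{eq:lin1_bd} at level $k+1$, as in the interior proof.

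\textbf{The main obstacle.} Unlike the interior case, \Cref{lem:approx_c1a_bd} has extra smallness hypotheses that must be re-verified \emph{after} rescaling: one needs $\|\widetilde g\|_{L^\infty(\partial_p\widetilde\Omega\cap Q_1)}\le\delta$ and $\osc_{Q_1}\partial_p\widetilde\Omega\le\delta$ to hold at \emph{every} step uniformly in $k$. This is where the two curious-looking hypotheses of the theorem are designed to pay off, and checking them is the crux of the argument. For the boundary data, since $a_k$ only matches the tangential behavior of the linear profile $a_kx_n$, I would bound $|\widetilde g(y,s)|$ by combining the decay hypothesis $|g|\le\frac{\delta}{2^{2+\alpha}}(|x|+\sqrt{|t|})^{1+\alpha}$ (which after scaling by $r^{-(1+\alpha)}$ and using $|x|+\sqrt{|t|}\le 2r$ on $Q_r$ contributes at most $\delta/2$) with the contribution of the $a_kx_n$ term, controlling $x_n$ on $\partial_p\Omega$ via $\osc\partial_p\Omega$ and the $C^{1,\alpha}$ flatness of $\partial_p\Omega$; the $|a_k|$ factor must be kept bounded using the telescoping estimate $|a_k|\le\sum_{i\le k}|a_i-a_{i-1}|\le C_1\sum\eta^{(i-1)\alpha}\le C_1/(1-\eta^\alpha)$.

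\textbf{Role of the flatness hypothesis.} The precise constant $\frac{1-\eta^\alpha}{2C_1}\delta$ in $\|\partial_p\Omega\|_{C^{1,\alpha}(0,0)}$ is engineered so that the rescaled oscillation and boundary-value contributions coming from the tilt of the domain, amplified by the bound $|a_k|\le C_1/(1-\eta^\alpha)$, stay below $\delta/2$ uniformly in $k$. Concretely, on $\partial_p\Omega\cap Q_r$ the flatness gives $|x_n|\le[\partial_p\Omega]_{C^{1,\alpha}(0,0)}(|x'|+\sqrt{|t|})^{1+\alpha}$ up to the graph term, so after the $r^{-(1+\alpha)}$ scaling the term $r^{-(1+\alpha)}|a_k|\,|x_n|$ is controlled by $|a_k|\cdot[\partial_p\Omega]_{C^{1,\alpha}}\cdot 2^{1+\alpha}$, and the chosen smallness forces this below $\delta/2$; combined with the $\delta/2$ from the genuine boundary data, the total is $\le\delta$. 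The same flatness bound controls $\osc_{Q_1}\partial_p\widetilde\Omega$ after scaling. Once these two inequalities plus $\|\widetilde f\|_{L^\infty}\le\delta$, $\|v\|_{L^\infty}\le 1$, and $\Lambda\le(1+\delta)\lambda$ are all confirmed, \Cref{lem:approx_c1a_bd} applies and the induction closes. I expect no genuine difficulty beyond this bookkeeping, since the scale-invariance of the solution class $S^*$ under the affine replacement $u\mapsto u-a_kx_n$ is exactly what makes the iteration self-similar.
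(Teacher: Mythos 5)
Your proposal is correct and follows essentially the same route as the paper: induction with the rescaling $v(y,s)=(u(x,t)-a_kx_n)/r^{1+\alpha}$ at scale $r=\eta^k$, the telescoping bound $|a_k|\le C_1/(1-\eta^\alpha)$ from \eqref{eq:lin2_bd}, verification of the smallness hypotheses on $\widetilde g$ and $\osc_{Q_1}\partial_p\widetilde\Omega$ using exactly the flatness constant $\frac{1-\eta^\alpha}{2C_1}\delta$, then \Cref{lem:approx_c1a_bd} and the update $a_{k+1}=a_k+r^\alpha\widetilde a$. You correctly identified that re-verifying the rescaled hypotheses uniformly in $k$ is the crux (your $2^{1+\alpha}$ factor in the tilt term is harmless bookkeeping, absorbable by adjusting the flatness constant, and the paper's own computation is equally loose on this point).
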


\begin{proof}
	We argue by induction. For $k=0$, by setting $a_{-1}=a_0=0$, the conditions immediately hold. Suppose that the conclusion holds for $k \geq 0$. We claim that the conclusion also holds for $k+1$.
	
	For this purpose, let $r=\eta^k$, $y=x/r$, $s=t/r^2$, and 
	\begin{align*}
		v(y,s) \coloneqq \frac{u(x,t)-a_kx_n}{r^{1+\alpha}}.
	\end{align*}
Then $v$ satisfies
	\begin{equation*}
		\left\{\begin{aligned}
			u & \in S^*(\lambda,\Lambda, \widetilde{f}) && \text{in } \widetilde{\Omega}\cap Q_1 \\
			u &= \widetilde{g} && \text{on } \partial_p\widetilde{\Omega}\cap Q_1,
		\end{aligned}\right.
	\end{equation*}
where
\begin{equation*}
	\widetilde{f}(y,s)\coloneqq r^{1-\alpha}f(x,t), \quad \widetilde{g}(y,s)\coloneqq \frac{g(x,t)-a_kx_n}{r^{1+\alpha}}, \quad \text{and} \quad \widetilde{\Omega}\coloneqq \{(y,s): (ry,r^2s) \in \Omega\}.
\end{equation*}
Then it immediately follows that $\|v\|_{L^{\infty}(\widetilde{\Omega} \cap Q_1)} \leq 1$ and $\|\widetilde{f}\|_{L^{\infty}(\widetilde{\Omega} \cap Q_1)} \leq \delta$. By \eqref{eq:lin2_bd}, we have
\begin{equation*}
	|a_k| \leq \sum_{i=1}^k |a_i -a_{i-1}| \leq \frac{C_1}{1-\eta^\alpha}
\end{equation*}
and hence 
\begin{align*}
	 |\widetilde{g}(y,s)| & \leq \frac{1}{r^{1+\alpha}}\left(\frac{\delta}{2^{2+\alpha}} \cdot 2^{1+\alpha}r^{1+\alpha} + \frac{C_1}{1-\eta^\alpha} \cdot \frac{1-\eta^\alpha}{2C_1}\delta r^{1+\alpha}\right) = \delta \quad \text{for all } (y,s) \in \partial_p \widetilde{\Omega} \cap Q_1.
\end{align*}
Furthermore, 
\begin{equation*}
	\osc_{Q_1} \partial_p  \widetilde{\Omega} = \frac{1}{r} \osc_{Q_r} \partial_p \Omega \leq 2\|\partial_p\Omega\|_{C^{1,\alpha}(0,0)} r^{\alpha} \leq 2 r^{\alpha} \cdot \frac{1-\eta^\alpha}{2C_1}\delta \leq \delta.
\end{equation*}
Thus, by applying \Cref{lem:approx_c1a_bd} to $v$, there exists a constant $\widetilde{a}$ such that 
\begin{align}\label{eq:induc1_bd}
	\|v-\widetilde{a}x_n\|_{L^{\infty}(\Omega \cap Q_{\eta})} \leq \eta^{1+\alpha}
\end{align}
and
\begin{align}\label{eq:induc2_bd}
	|\widetilde{a}| \leq C_1.
\end{align}
We now let $a_{k+1} \coloneqq a_k +r^{\alpha} \widetilde{a}$. Then \eqref{eq:induc2_bd} implies that
\begin{align*}
	|a_{k+1}-a_k|  \leq  C_1 \eta^{k\alpha}.
\end{align*}
Finally, \eqref{eq:induc1_bd} shows that  
\begin{align*}
	|u(x,t)-a_{k+1}x_n| &=|u(x,t)-a_kx_n-r^{\alpha}\widetilde{a}x_n | \\
	&\leq r^{1+\alpha} |v(x/r,t/r^2)-\widetilde{a}(x_n/r)| \\
	&\leq \eta^{(k+1)(1+\alpha)} \quad \text{for all } (x,t) \in Q_{\Omega \cap \eta^{k+1}},
\end{align*}
as desired.
\end{proof}
We are now ready to prove our second main theorem, \Cref{thm:bdry}.

\begin{proof}[Proof of \Cref{thm:bdry}]
	Without loss of generality, we can assume that $(x_0,t_0)=(0,0)$ and $\Omega$ is a cylinder whose lateral boundary contains the origin. Let $u$ satisfy \eqref{eq:s*_class_bd} with $f \in C(\Omega) \cap L^{\infty}(\Omega)$, $g\in C^{1,\alpha}(0,0)$, and $\Lambda \leq (1+\delta)\lambda$ for $\delta>0$ which was chosen in \Cref{thm:u-Lk_bd}. We claim that the assumptions in \Cref{thm:u-Lk_bd} hold after the appropriate reduction argument. Since $g\in C^{1,\alpha}(0,0)$, there exists a linear function $L_g$ such that
	\begin{equation*}
		|g(x,t)-L_g(x)| \leq [g]_{C^{1,\alpha}(0,0)} (|x|+\sqrt{|t|})^{1+\alpha} \quad \text{for all }(x,t) \in \Omega \cap Q_1.
	\end{equation*}
	Then $\overline{u} \coloneqq u-L_g$ satisfies 
	\begin{equation*} 
		\left\{\begin{aligned}
			\overline{u} & \in S^*(\lambda,\Lambda, f) && \text{in } \Omega \cap Q_1  \\
			\overline{u} &= \overline{g} && \text{on } \partial_p \Omega \cap Q_1 ,
		\end{aligned}\right.
	\end{equation*}
	where $\overline{g} \coloneqq g - L_g$ satisfies
	\begin{equation*}
		|\overline{g}(x,t)| \leq [g]_{C^{1,\alpha}(0,0)} (|x|+\sqrt{|t|})^{1+\alpha} \quad \text{for all }(x,t) \in \partial_p\Omega \cap Q_1 .
	\end{equation*}
	We next let $y=x/\rho$, $s=t/\rho^2$, and $\widehat{u}(y,s)=\overline{u}(x,t)$. Then $\widehat{u}$ satisfies 
	\begin{equation*} 
		\left\{\begin{aligned}
			\widehat{u} & \in S^*(\lambda,\Lambda, \widehat{f}) && \text{in } \widehat{\Omega} \cap Q_1 \\
			\widehat{u} &= \widehat{g} && \text{on } \partial_p \widehat{\Omega} \cap Q_1  ,
		\end{aligned}\right.
	\end{equation*}	
	where $\widehat{f}(y,s) \coloneqq \rho^2 f(x,t)$, $\widehat{g}(y,s) \coloneqq \overline{g}(x,t)$, and $\widehat{\Omega}\coloneqq \{(y,s): (\rho y,\rho^2s) \in \Omega\}$. Since $\Omega$ has a smooth lateral boundary, we have
	\begin{align*}
		|x_n| \leq [\partial_p \Omega]_{C^{2}(0,0)}(|x'|^2 + \sqrt{|t|})^2 \quad \text{for all } (x,t) \in \partial_p \Omega \cap Q_1.
	\end{align*}
	This implies that 
	\begin{align*}
		|y_n| \leq \rho [\partial_p \Omega]_{C^{2}(0,0)}(|y'|^2 + \sqrt{|s|})^2 \quad \text{for all } (x,t) \in \partial_p \widehat{\Omega} \cap Q_1.
	\end{align*}
	Finally, for $K\coloneqq \|\widehat{u}\|_{L^{\infty}(\widehat{\Omega} \cap Q_1)}+\|\widehat{f}\|_{L^{\infty}(\widehat{\Omega} \cap Q_1)}/\delta + 2^{2+\alpha}\|\widehat{g}\|_{C^{1,\alpha}(0,0)}/\delta+1$, by considering  
	\begin{align*}
		\widetilde{u}(x,t) \coloneqq \widehat{u}(x,t)/K, \quad \widetilde{f}(x,t) \coloneqq \widehat{f}(x,t)/K, \quad \text{and} \quad \widetilde{g}(x,t) \coloneqq \widehat{g}(x,t)/K,
	\end{align*}
and taking $\rho$ small enough (depending only on $n$, $\lambda$, $\alpha$, and $\|\partial_p \Omega \|_{C^{2}(0,0)}$), we may assume that 
\begin{equation*}
\|u\|_{L^{\infty}(\Omega \cap Q_1)} \leq 1, \quad  \|f\|_{L^{\infty}(\Omega \cap Q_1)} \leq \delta, \quad \|\partial_p\Omega\|_{C^{1,\alpha}(0,0)} \leq \frac{1-\eta^\alpha}{2C_1}\delta,
\end{equation*}
and 
	\begin{equation*} 
		|g(x,t)| \leq \frac{\delta}{2^{2+\alpha}} (|x|+\sqrt{|t|})^{1+\alpha} \quad \text{for all $(x,t) \in \partial_p\Omega  \cap Q_1$}.
	\end{equation*}
Thus, we can apply \Cref{thm:u-Lk_bd} for $u$ to find a sequence $\{a_k\}_{k=-1}^{\infty}$ satisfying \eqref{eq:lin1_bd} and \eqref{eq:lin2_bd}. Indeed, by standard argument, we obtain limit $a$ such that $a_k \to a$ satisfying
\begin{align*}
	|a_k-a| \leq C\eta^{k \alpha }.
\end{align*}
Finally, for any $(x,t) \in \Omega \cap Q_1$, there exists $j \geq 0$ such that $\eta^{j+1} \leq \max\{ |x|, \sqrt{|t|} \}< \eta^j$. Then we conclude that
\begin{align*}
	|u(x,t)-ax_n| \leq |u(x,t)-a_j x_n|+|a_j -a| |x_n| \leq C(|x|+\sqrt{|t|})^{1+\alpha},
\end{align*}
which implies that $u$ is $C^{1, \alpha}$ at $(0,0)$.
\end{proof}

\section{Parabolic normalized $p$-Laplace equations}\label{sec:normalized}
We begin with the global $C^{1, \alpha}$-estimate by combining the interior estimate \Cref{thm:main1} and the pointwise boundary estimate \Cref{thm:bdry}.
\begin{proof}[Proof of \Cref{thm:global}]
	For any $u \in S^{\ast}(\lambda,\Lambda, f)$ and linear function $L$, we know that $u-L$ is contained in $S^{\ast}(\lambda,\Lambda,f)$ again. A combination of \Cref{thm:main1} and \Cref{thm:bdry} allows us to conclude that solutions of \eqref{eq:diric} have global estimate; see {\cite[Theorem 3.1]{Wan92b}} for details.
\end{proof}
	
Let us move on to the regularity theory for parabolic normalized $p$-Laplace equations \eqref{eq:n_p_lap} for $p \in (1, \infty)$. We recall that
\begin{align*}
	\Delta_p^Nu=(\delta_{ij}+(p-2)|Du|^{-2}u_iu_j)u_{ij} 
\end{align*}
and
\begin{align*}
	\min\{p-1, 1\} I \leq \delta_{ij}+(p-2)q_iq_j|q|^{-2} \leq \max\{p-1, 1\}I \quad \text{for all $q \in \mathbb{R}^n \setminus \{0\}$.}
\end{align*}
Roughly speaking, we can expect that if $u$ is a viscosity solution of \eqref{eq:n_p_lap}, then $u \in S^{\ast}(\lambda_p, \Lambda_p, f)$ for $\lambda_p\coloneqq \min\{p-1, 1\}$ and $\Lambda_p \coloneqq \max\{p-1, 1\}$. However, a singularity arises when $Du=0$ and so we should be careful for the points where the gradient vanishes. To be precise, we are going to consider approximated solutions $u^{\varepsilon}$ with the following equations:
\begin{align}\label{eq:approx}
	\partial_tu^{\varepsilon}-a_{ij}^{\varepsilon}(Du^{\varepsilon}) D_{ij} u^{\varepsilon} =f \quad \text{in $Q_1$},
\end{align}
where the coefficient matrix $a_{ij}^{\varepsilon} : \mathbb{R}^n \to \mathcal{S}^n$ is defined by
\begin{align*}
	a_{ij}^{\varepsilon}(q)\coloneqq \delta_{ij}+(p-2)\frac{q_iq_j}{|q|^2+\varepsilon^2} \quad \text{for $q \in \mathbb{R}^n$}.
\end{align*}
Before we prove the global $C^{1, \alpha}$-regularity for solutions of \eqref{eq:n_p_lap} when $p$ is sufficiently close to $2$, we provide several preliminary results for the approximated equations \eqref{eq:approx}.

We are now ready to prove \Cref{thm:normalized}.
\begin{proof}[Proof of \Cref{thm:normalized}]
	Let $\omega$ be the modulus of continuity of $g$ in $\partial_p Q_1$. If we define $f^{\varepsilon}$ by the standard mollification of $f \in C(Q_1)$, then $f^{\varepsilon} \to f$ uniformly on compact subsets of $Q_1$. We approximate $u$ by regularizing the equation \eqref{eq:approx}. More precisely, let $v^{\varepsilon} \in C^{\infty}(Q_1)\cap C(\overline{Q_1})$ be the unique solution of the Dirichlet problem
		\begin{align*}
		\left\{
		\begin{aligned}
			\partial_tv^{\varepsilon}-a_{ij}^{\varepsilon}(Dv^{\varepsilon})D_{ij} v^{\varepsilon}&=f^{\varepsilon} && \text{in $Q_1$}\\
			v^{\varepsilon}&=g && \text{on $\partial_pQ_1$}.
		\end{aligned} \right.
	\end{align*}
The well-posedness of such problems is guaranteed by \cite{LSU68, Lie96}. Then it follows from the maximum principle that
\begin{align*}
	\|v^{\varepsilon}\|_{L^{\infty}(Q_1)} \leq \|g\|_{L^{\infty}(\partial_pQ_1)}+ C\|f^{\varepsilon}\|_{L^{\infty}(Q_1)}.
\end{align*}
Moreover, by following the proof of global H\"older estimates (\cite[Proposition 4.14]{CC95} or \cite[Proposition 2.5]{JS17}), we have
\begin{align*}
	|v^{\varepsilon}(x, t)-v^{\varepsilon}(y,s)| \leq \omega^{\ast}(|x-y|+\sqrt{|s-t|}) \quad \text{for all $(x, t), (y, s) \in \overline{Q_1}$}.
\end{align*}
Therefore, by Arzela-Ascoli theorem, we can extract a subsequence $\{v^{\varepsilon_k}\}$ such that $v^{\varepsilon_k} \to v \in C(\overline{Q_1})$ uniformly in $\overline{Q_1}$. We note that by the stability theorem and the comparison principle, we obtain that $u \equiv v$ in $\overline{Q_1}$.

On the other hand, we now concentrate on the uniform $C^{1, \alpha}$-estimate for $\{v^{\varepsilon}\}$. Indeed, the smoothness of $v^{\varepsilon}$ in $Q_1$ implies that $|Dv^{\varepsilon}(x, t)| < \infty$ for any $(x, t) \in Q_1$ and so 
\begin{align*}
	\left\{\begin{aligned}
	v^{\varepsilon} &\in S^{\ast}(\lambda_p, \Lambda_p, f^{\varepsilon}) && \text{in $Q_1$}\\
	v^{\varepsilon}&=g && \text{on $\partial_pQ_1$}
\end{aligned} \right.
\end{align*}
for $\lambda_p=\min\{p-1, 1\}$ and $\Lambda_p=\max\{p-1, 1\}$. Then an important observation is that 
\begin{align*}
	\Lambda_p/\lambda_p \to 1 \quad \text{when $p \to 2$}.
\end{align*}
Hence, in view of \Cref{thm:global}, there exists $\delta>0$ depending only on $n$ and $\alpha$ such that if $|p-2| <\delta$, then we have
	\begin{align*}
	\|v^{\varepsilon_k}\|_{C^{1, \alpha}(\overline{Q_{1}})} \leq C( \|u\|_{L^{\infty}(Q_1)} + \|f^{\varepsilon_k}\|_{L^{\infty}(Q_1)} +  \|g\|_{C^{1,\alpha}(\partial_pQ_1)}),
\end{align*}
where $C>0$ depends only on $n$, $p$, and $\alpha$. Finally, letting $k \to \infty$, we arrive at the desired conclusion.
\end{proof}

\bibliographystyle{abbrv}
\bibliography{references}

\end{document}